\def\hhmm{\number\hh:\ifnum\mm<10{}0\fi\number\mm}
\DeclareMathOperator{\pr}{pr}
\DeclareMathOperator{\rs}{rs}
\DeclareMathOperator{\sep}{sep}
\DeclareMathOperator{\dlog}{dlog}
\DeclareMathOperator{\cont}{cont}
\DeclareMathOperator{\coker}{coker}
\DeclareMathOperator{\Spec}{Spec}
\DeclareMathOperator{\Char}{char}
\DeclareMathOperator{\Aut}{Aut}
\DeclareMathOperator{\Hom}{Hom}
\DeclareMathOperator{\HHom}{\mathscr{H}\!\!\text{\emph{om}}}
\DeclareMathOperator{\Strat}{Strat}
\DeclareMathOperator{\Gal}{Gal}
\DeclareMathOperator{\id}{id}
\DeclareMathOperator{\gp}{gp}
\DeclareMathOperator{\red}{red}
\DeclareMathOperator{\Frac}{Frac}
\newcommand{\Z}{\mathbb{Z}}
\newcommand{\N}{\mathbb{N}}
\newcommand{\llparen}{(\!(}
\newcommand{\rrparen}{)\!)}
\renewcommand{\subset}{\subseteq}
\newcommand{\aut}{\underline{\Aut}^\otimes}
\renewcommand{\phi}{\varphi}
\declaretheoremstyle[spaceabove=15pt,spacebelow=15pt, qed=$\square$]{defstyle}
\declaretheoremstyle[spaceabove=15pt, spacebelow=15pt, bodyfont=\itshape, qed=$\square$]{thmstyle}
\declaretheorem[style=thmstyle,numberwithin=section]{Theorem}
\declaretheorem[style=thmstyle, 
name=Main Theorem,
numbered=no
]{MainTheorem}
\declaretheorem[sibling=Theorem, style=thmstyle]{Proposition}
\declaretheorem[sibling=Theorem,  style=thmstyle]{Lemma}
\declaretheorem[sibling=Theorem, style=thmstyle]{Corollary}
\declaretheorem[sibling=Theorem, style=defstyle]{Definition}
\declaretheorem[sibling=Theorem, style=defstyle]{Remark}
\declaretheorem[style=thmstyle,numbered=no]{Claim}
\declaretheorem[sibling=Theorem, style=defstyle]{Example}
\declaretheorem[numbered=no]{Acknowledgements}
\title{The homotopy sequence for regular singular stratified bundles}
\author{Giulia Battiston}
\address[Giulia Battiston]{ Ruprecht-Karls-Universität Heidelberg, Mathematisches Institut, Im Neuenheimer Feld 288, D-69120 Heidelberg }
\email{gbattiston@mathi.uni-heidelberg.de}
\author{Lars Kindler}
\address[Lars Kindler]{Freie Universit\"at Berlin,
	Fachbereich Mathematik und Informatik,
	Arnimallee 3,
	10495 Berlin, Germany
}
\email{kindler@math.fu-berlin.de}
\thanks{MSC2010: 14F10, 14F35, 14L15\\The first author is supported by the MAthematical Center Heidelberg (MATCH).\\The second author gratefully acknowledges support of the DFG (``Deutsche Forschungsgemeinschaft'') and Harvard University.}
\date{\today{ }\hhmm}
\begin{document}
\begin{abstract}
	A separable, proper morphism of varieties with geometrically connected fibers induces a homotopy exact sequence relating the \'etale fundamental groups of source, target and fiber. Extending work of dos Santos, we prove the existence of an analogous homotopy exact sequence for fundamental group schemes classifying regular singular stratified bundles, under the additional assumption that the morphism in question can be (partially) compactified to a log smooth morphism.
\end{abstract}

\maketitle
\section{Introduction}
As proved in  \cite{SGA1}, if $f:Y\to X$ is a separable, proper morphism of schemes  with geometrically connected fibers, and if $\bar{y}$  is a geometric point of $Y$, then the \'etale fundamental groups of $Y$, $X$ and of the fiber $Y_{f(\bar{y})}$ are related via the exact sequence
\begin{equation}\label{eq:introSES}\pi_1(Y_{f(\bar{y})},\bar{y})\to\pi_1(Y,\bar{y})\to\pi_1(X,f(\bar{y}))\to 1.\end{equation}
In this article, we establish the existence of an exact sequence analogous to \eqref{eq:introSES} for the affine group schemes classifying regular singular stratified bundles (\Cref{sec:Rs-strat}) on $X,Y$ and $Y_{f(\bar{y})}$.

Sequences similar to \eqref{eq:introSES} have been studied for many different kinds of fundamental groups (see for example \cite{DiProShisho}, \cite{DosSantos/HomotopySES}, \cite{Hoshi}, \cite{Hai}, \cite{Lei/Nori}, \cite{Lei/AlgebraicFG}). This article was inspired by the following two particular examples.
\begin{itemize}
	\item In \cite{Hoshi}, it is proved that if $f:Y\rightarrow X$ is a log smooth morphism of fs log schemes with $X$ connected and log regular, then there is an exact sequence  \eqref{eq:introSES} with the  log fundamental group (which can be seen as a generalization of the tame fundamental group, see \cite[Ex.~4.7, (c)]{Illusie/Overview}) instead of the \'etale fundamental group.
	\item In \cite{DosSantos/HomotopySES}, it is proved that if $f$ is smooth, projective and if $Y$, $X$ are smooth connected varieties over an algebraically closed field $k$ of arbitrary characteristic, then there is a homotopy exact sequence \eqref{eq:introSES} for the affine $k$-group schemes which are obtained via Tannaka duality  from the category of stratified bundles, that is from the category of $\mathcal{O}$-coherent $\mathscr{D}$-modules.
\end{itemize}

The objects studied in this article morally lie in the intersection of the above examples, as the notion of regular singularity for an $\mathcal{O}$-coherent $\mathcal{D}$-module naturally specializes to the notion of a tamely ramified covering (see \Cref{sec:Rs-strat} for details and definitions).

The main result is as follows. If $k$ is an algebraically closed field of positive characteristic and if $X$ is a smooth $k$-variety, let us denote by $\Pi^{\rs}(X,x)$ the affine $k$-group scheme corresponding to the category of regular singular stratified bundles on $X$ with respect to the base point $x\in X(k)$ (see \Cref{defn:stratifiedBundles}). 

\begin{MainTheorem}[{see \Cref{thm:sesFunG}}]\label{thm:main} Let $f:Y\to X$ be a smooth, projective morphism of smooth, connected $k$-varieties with geometrically connected fibers, and assume that $X$ admits a good compactification $\overline{X}$ (\Cref{defn:stratifiedBundles}, \ref{defn:gpc}). Assume furthermore that:
	\begin{enumerate}[label={\emph{(\roman*)}}, ref={(\roman*)}]
		\item There is a good partial compactification  $Y\subset \overline{Y}$ such that $f$ extends to $\bar{f}:\overline{Y}\to\overline{X}$ and $\bar{f}(\overline{Y})$ contains every codimension $1$ point of $\overline{X}$;
		\item\label{item:mainThm:ii} $\bar{f}$ is log smooth with respect to the natural fs log structure induced on $\overline{X}$ and $\overline{Y}$ by their divisors at infinity (see \Cref{rem:log-smoothness}).
	\end{enumerate}

Then, for every $x\in X(k)$ and $y\in Y_x(k)$, there is an exact sequence of $k$-group schemes
\[\Pi(Y_x,y)\xrightarrow{j^*}\Pi^{\rs}(Y,y)\xrightarrow{f^*} \Pi^{\rs}(X,x)\to 1.\]
\end{MainTheorem}
Moreover, the theorem admits a refinement dealing with the notion of regular singularity with respect some specific good partial compactification of $X$, therefore we obtain information which is not conditional on the existence of a good compactification of $X$.
We would like to stress that the log smoothness assumption in \ref{item:mainThm:ii} cannot be dropped entirely. Namely (see \Cref{rmk:counterex}), we prove that the example described in \Cref{sec:ExRa}, which is due to Raynaud, provides a counterexample to the exactness of the homotopy sequence in general.
\

The article is organized as follows. In \Cref{sec:Rs-strat} we recall the definitions of stratified bundles and of regular singularity; in \Cref{sec:log-diffOps} we recall the notion of log smoothness and study the pullback of logarithmic differential operators along such morphisms. In \Cref{sec:ExRa} we present the example of Raynaud.  \Cref{sec:ExCri} establishes a criterion for a sequence of affine $k$-group schemes to be exact, and the proof of the main theorem is carried out in \Cref{sec:Homty}.

\begin{Acknowledgements}We would like to thank H\'el\`ene Esnault for communicating to us the example from \Cref{sec:ExRa}, and we are are grateful to Michel Raynaud for allowing us to include it in this article.
\end{Acknowledgements}

\section{Regular singular stratified bundles}\label{sec:Rs-strat}
Let $k$ be an algebraically closed field of characteristic $p\geq 0$. 
We begin by recalling some basic facts about stratified bundles and regular singularity. For full details we refer to \cite{Gieseker/FlatBundles} and \cite{Kindler/FiniteBundles}. 
\begin{Definition}\label{defn:stratifiedBundles}
	Let $X$ be a smooth, separated, finite type $k$-scheme. 
	\begin{enumerate}
		\item A \emph{stratified bundle on $X$} is a left-$\mathscr{D}_{X/k}$-module $E$ which is coherent as an $\mathscr{D}_{X/k}^{\leq 0}=\mathcal{O}_X$-module. Here $\mathscr{D}_{X/k}$ is the sheaf of differential operators defined in \cite[\S16]{EGA4}, and $\mathscr{D}_{X/k}^{\leq n}$, $n\geq 0$, the subsheaf of operators of order at most $n$. The category of stratified bundles with morphisms the morphisms of left-$\mathscr{D}_{X/k}$-modules, is denoted by $\Strat(X)$.
		\item\label{defn:gpc} If $\overline{X}$ is a smooth, separated, finite type $k$-scheme, such that there is an open immersion $X\hookrightarrow \overline{X}$, such that $\overline{X}\setminus X$ is  the support of a strict normal crossings divisor, then the pair $(X,\overline{X})$ is called a \emph{good partial compactification of $X$}. If $\overline{X}$ is proper, then $(X,\overline{X})$ is called a \emph{good compactification of $X$}.
		\item\label{item:defn:regSing}  If $(X,\overline{X})$ is a good
			partial compactification of $X$ and if 
			$D:=(\overline{X}\setminus X)_{\red}$ is the associated strict normal crossings divisor, then an object
			$E\in \Strat(X)$ is said to be
			\emph{$(X,\overline{X})$-regular singular} if there
			exists a $\mathscr{D}_{\overline{X}/k}(\log D)$-module
			$E$, which is torsion-free and coherent with respect to
			the induced $\mathcal{O}_{\overline{X}}$-module structure,
			such that there is an isomorphism
			\[E\cong \overline{E}|_X\]
			in $\Strat(X)$. The sheaf $\mathscr{D}_{\overline{X}/k}(\log D)$ is the sheaf of subrings of $\mathscr{D}_{\overline{X}/k}$ generated by operators fixing all powers of the ideal of $D$. For more details see \cite[p.~17]{Gieseker/FlatBundles} or \cite[\S3]{Kindler/FiniteBundles}.

			We denote by $\Strat^{\rs}( (X,\overline{X}))$ the full
			subcategory of $\Strat(X)$ with objects the
			$(X,\overline{X})$-regular singular stratified bundles.
			If $X$ is connected, then after fixing a base point $x\in X(k)$, $\Strat^{\rs}((X,\overline{X}))$ is a neutral Tannakian category over $k$ (\cite[Prop.~4.5]{Kindler/FiniteBundles}) and we denote the associated group scheme by $\Pi_1^{\rs}((X,\overline{X}),x)$. If $(X,\overline{X})$ is a good compactification, then this group scheme is independent of the choice of $\overline{X}$ (\cite[Thm.~3.13]{Gieseker/FlatBundles}), so  we also write $\Pi_1^{\rs}(X,x)$.
	\end{enumerate}
\end{Definition}
It is proved in \cite[Thm.~5.2]{Kindler/FiniteBundles} that $\overline{E}$ in \Cref{defn:stratifiedBundles}, \ref{item:defn:regSing} can be required to be a locally free $\mathcal{O}_X$-module without changing the definition.
In this article, we also use the following simple criterion.

\begin{Lemma}\label{lemma:rs-criterion}
	Let $X$ be a smooth, separated, finite type $k$-scheme, let $(X,\overline{X})$ a good partial
	compactification, let  $D:=(\overline{X}\setminus X)_{\red}$ be the
	associated strict normal crossings
	divisor, and  let  $j:X\hookrightarrow \overline{X}$ be the associated open
	immersion. Let
	$E$ be a stratified bundle on $X$, and let $\overline{E}$ be any locally
	free, coherent $\mathcal{O}_{\overline{X}}$-module extending $E$. Then the adjunction
	map $\overline{E}\rightarrow
	j_*j^*\overline{E}=j_*E$ is injective and we consider $\overline{E}$ as a
	sub-$\mathcal{O}_{\overline{X}}$-module of $j_*E$. Recall that $j_*E$ carries
	a natural 
	$\mathscr{D}_{\overline{X}/k}(\log D)$-action. The following 
	statements are equivalent.
	\begin{enumerate}[label={\emph{(\alph*)}}, ref={(\alph*)}]
		\item\label{item:rs-criterion1} $E$ is $(X,\overline{X})$-regular
			singular, 
		\item\label{item:rs-criterion2} The
			$\mathscr{D}_{\overline{X}/k}(\log D)$-submodule of $j_*E$
			generated by $\overline{E}$ is $\mathcal{O}_{\overline{X}}$-coherent.
	\end{enumerate}
\end{Lemma}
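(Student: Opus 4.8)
The plan is to prove the two implications \ref{item:rs-criterion1}$\Rightarrow$\ref{item:rs-criterion2} and \ref{item:rs-criterion2}$\Rightarrow$\ref{item:rs-criterion1} directly inside $j_*E$, after recording two preliminary observations. First, $j_*E$ is torsion-free as an $\mathcal{O}_{\overline{X}}$-module: since $\overline{X}\setminus X$ is a divisor, $X$ is dense in $\overline{X}$, and a local section of $j_*E$ over an open $U$ killed by a non-zerodivisor of $\mathcal{O}_{\overline{X}}(U)$ restricts to a torsion section of the locally free sheaf $E$ over $U\cap X$, hence vanishes there, hence vanishes, because $(j_*E)(U)=E(U\cap X)$. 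In particular every $\mathcal{O}_{\overline{X}}$-submodule of $j_*E$ is torsion-free. Second, for every $N\in\Z$ the invertible subsheaf $\mathcal{O}_{\overline{X}}(ND)\subseteq j_*\mathcal{O}_X$ is stable under the natural $\mathscr{D}_{\overline{X}/k}(\log D)$-action on $j_*\mathcal{O}_X$ — a direct consequence of the defining property that logarithmic operators fix every positive power of the ideal sheaf $\mathcal{I}_D$ of $D$, which forces them to preserve the negative powers $\mathcal{O}_{\overline{X}}(ND)$ as well — so that for any $\mathscr{D}_{\overline{X}/k}(\log D)$-submodule $\overline{F}\subseteq j_*E$ the twist
\[\overline{F}(ND):=\mathcal{O}_{\overline{X}}(ND)\cdot\overline{F}\ \subseteq\ j_*E\qquad(\,\cong\mathcal{O}_{\overline{X}}(ND)\otimes_{\mathcal{O}_{\overline{X}}}\overline{F}\,),\]
which is $\mathcal{O}_{\overline{X}}$-coherent when $\overline{F}$ is and satisfies $\overline{F}(ND)|_X=\overline{F}|_X$, is again a $\mathscr{D}_{\overline{X}/k}(\log D)$-submodule of $j_*E$.

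For \ref{item:rs-criterion2}$\Rightarrow$\ref{item:rs-criterion1}: let $\overline{E}_\infty\subseteq j_*E$ be the $\mathscr{D}_{\overline{X}/k}(\log D)$-submodule generated by $\overline{E}$. By hypothesis it is $\mathcal{O}_{\overline{X}}$-coherent; by the first observation it is torsion-free; and since $\overline{E}\subseteq\overline{E}_\infty\subseteq j_*E$ together with $\overline{E}|_X=E=(j_*E)|_X$ forces $\overline{E}_\infty|_X=E$ as an object of $\Strat(X)$ (using the $\mathscr{D}_{X/k}$-action obtained by restricting that on $j_*E$). Thus $\overline{E}_\infty$ is a torsion-free, $\mathcal{O}_{\overline{X}}$-coherent $\mathscr{D}_{\overline{X}/k}(\log D)$-module restricting to $E$ on $X$, which is exactly what \Cref{defn:stratifiedBundles}, \ref{item:defn:regSing} requires; hence $E$ is $(X,\overline{X})$-regular singular.

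For \ref{item:rs-criterion1}$\Rightarrow$\ref{item:rs-criterion2}: choose a torsion-free, $\mathcal{O}_{\overline{X}}$-coherent $\mathscr{D}_{\overline{X}/k}(\log D)$-module $\overline{F}$ together with an isomorphism $\overline{F}|_X\cong E$ in $\Strat(X)$. As $\overline{F}$ is torsion-free, no nonzero local section of it is supported on $D$, so the adjunction map $\overline{F}\to j_*j^*\overline{F}$ is injective; composing it with $j_*$ of the given isomorphism identifies $\overline{F}$ with a $\mathscr{D}_{\overline{X}/k}(\log D)$-submodule of $j_*E$ with $\overline{F}|_X=E$. Now $\overline{E}$ and $\overline{F}$ are both $\mathcal{O}_{\overline{X}}$-coherent subsheaves of $j_*E$ agreeing with $E$ over $X$, so $\overline{E}+\overline{F}$ is $\mathcal{O}_{\overline{X}}$-coherent and the coherent quotient $(\overline{E}+\overline{F})/\overline{F}$ restricts to $0$ over $X$; being supported on $D$ it is annihilated by $\mathcal{I}_D^{N}$ for some $N\geq 0$, i.e.
\[\overline{E}\ \subseteq\ \mathcal{I}_D^{-N}\overline{F}\ =\ \overline{F}(ND)\qquad\text{inside }j_*E.\]
By the second observation $\overline{F}(ND)$ is an $\mathcal{O}_{\overline{X}}$-coherent $\mathscr{D}_{\overline{X}/k}(\log D)$-submodule of $j_*E$, so it contains the $\mathscr{D}_{\overline{X}/k}(\log D)$-submodule generated by $\overline{E}$; the latter is then a quasi-coherent submodule of a coherent sheaf on the Noetherian scheme $\overline{X}$, hence $\mathcal{O}_{\overline{X}}$-coherent. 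This is \ref{item:rs-criterion2}.

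I expect the only step requiring genuine care to be the comparison of the two $\mathcal{O}_{\overline{X}}$-lattices $\overline{E}$ and $\overline{F}$ inside $j_*E$ in the last paragraph — that $\overline{E}$ is contained in a bounded twist $\overline{F}(ND)$ of $\overline{F}$ — since this is what lets one trap the $\mathscr{D}_{\overline{X}/k}(\log D)$-module generated by $\overline{E}$ between $\overline{E}$ and a coherent module. The remaining ingredients (torsion-freeness of $j_*E$, the logarithmic module structure on the twists $\mathcal{O}_{\overline{X}}(ND)$, injectivity of the adjunction for torsion-free sheaves, and coherence of quasi-coherent submodules of coherent sheaves over a Noetherian base) are routine; one should only be careful that the $\mathscr{D}_{\overline{X}/k}(\log D)$-action used throughout is the one on $j_*E$ extending the $\mathscr{D}_{X/k}$-action on $E$, so that all restrictions to $X$ are taken in $\Strat(X)$ and not merely in $\mathcal{O}_X$-modules.
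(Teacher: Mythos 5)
Your proof is correct and follows essentially the same route as the paper: the key step in both is to trap the $\mathscr{D}_{\overline{X}/k}(\log D)$-submodule of $j_*E$ generated by $\overline{E}$ inside a twist $\overline{F}(ND)$ of a coherent logarithmic extension, observing that such a twist is again a coherent $\mathscr{D}_{\overline{X}/k}(\log D)$-submodule of $j_*E$. The only cosmetic differences are that you use the torsion-free coherent extension furnished directly by the definition and argue globally with $\mathcal{I}_D$ and the support of $(\overline{E}+\overline{F})/\overline{F}$, whereas the paper invokes the locally free extension of \cite[Thm.~5.2]{Kindler/FiniteBundles} and reduces to a local picture with $D$ defined by a single equation $t$; the easy implication, which you spell out, is simply declared trivial there.
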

\begin{proof}
	Only \ref{item:rs-criterion1} $\Rightarrow$ \ref{item:rs-criterion2} is
	nontrivial. We may assume that $\overline{X}=\Spec A$ is affine and $D$ regular, defined by $t\in A$. Assume that $E$ is $(X,\overline{X})$-regular singular. According \cite[Thm.~5.2]{Kindler/FiniteBundles} there exists a locally free, coherent $\mathcal{O}_{\overline{X}}$-module $\overline{E}'$ with an action of $\mathscr{D}_{\overline{X}/k}(\log D)$, such that $\overline{E}'|_X\cong E$ as $\mathscr{D}_{X/k}$-modules.
	Then there exists some $N\geq 0$, such that $\overline{E}\subset
	t^{-N}\overline{E}'$ as coherent $\mathcal{O}_{\overline{X}}$-submodules of $j_*E$.  Note that
	$t^{-N}\overline{E}'$ is also a $\mathscr{D}_{\overline{X}/k}(\log D)$-submodule
	of $j_*E$. Thus the $\mathscr{D}_{X/k}(\log D)$-submodule of $j_*E$
	generated by $\overline{E}$ is contained in $t^{-N}\overline{E}'$ and
	hence coherent, which is what we wanted to prove.
\end{proof}
\section{Logarithmic differential operators and logarithmic smoothness}\label{sec:log-diffOps}
For the details of the theory of logarithmic schemes we refer to \cite{Kato/LogSchemes}, but we briefly recall the facts that we use in this article.
More details can also be found in \cite[2.1]{Kindler/Thesis}.
\begin{Remark}\label{rem:log-smoothness}
	The notion of an fs (fine and saturated) log structure can be understood as a generalization of the notion of a good partial compactification (\Cref{defn:stratifiedBundles}).
	Let $k$ be a field. 
		If $X$ is a smooth, separated, finite type $k$-scheme and
			$(X,\overline{X})$ a good partial compactification of
			$X$, write $M_{\overline{X}}:=M_{(X,\overline{X})}$ for
			the presheaf given by
\[M_{(X,\overline{X})}(U):=\{g\in \mathcal{O}_{\overline{X}}(U)\mid g|_{U\cap X}\in \mathcal{O}_{X}(X\cap U)^\times\},\]
for $U\subset \overline{X}$ open. This is in fact a sheaf of monoids and the natural map $\mathcal{O}_{\overline{X}}^\times \rightarrow M_{\overline{X}}$
 makes $(\overline{X},M_{\overline{X}})$ into an fs log scheme.

It is easy to write down local charts for this log scheme. If ${U}=\Spec
A\subset \overline{X}$ is an affine open subset such that the boundary divisor
is defined by $V(t_1\cdots\ldots\cdot t_r)$, where $t_1,\ldots, t_r$ are part
of a regular system of parameters for $A$, then we obtain a morphism of monoids
$\N^r\rightarrow M_{\overline{X}}(U)$, $(m_1,\ldots, m_r)\mapsto \prod_{i=1}^rt_i^{m_i}$. This
defines a chart ${(U, M_{\overline{X}}|_U)}\rightarrow \Spec \Z[\N^r]$.

More generally, given a finite set of units $u_1,\ldots, u_n\in A^\times$, the morphism of monoids
\[\Z^n\oplus \N^r\rightarrow M_{\overline{X}}(U), (e_1,\ldots, e_n, m_1,\ldots, m_r)\mapsto \prod_{i=1}^nu_i^{e_i}\prod_{i=1}^r t^{m_i}_i\]
also induces a chart.

For a particular example of a chart for a morphism arising from good compactifications, and for what it means for such a morphism to be log smooth, see \Cref{ex:logSmooth}.
\end{Remark}

\begin{Definition}\label{defn:logPP}
As in the non-log case, if $(X,M_X)$ is an fs $(S,M_S)$-log scheme, for every $n\geq 0$, one defines \emph{the sheaf $\mathscr{P}^n_{(X,M_X)/(S,M_S)}$ of logarithmic principal parts of order $n$} as the structure sheaf of the $n$-th order thickening of a certain log diagonal (\cite[Rem.~5.8]{Kato/LogSchemes}). It is equipped with three ring homomorphisms 
\begin{equation*}
	\begin{tikzcd}
		\mathcal{O}_X\rar{d_i}& \mathscr{P}_{(X,M_X)/(S,M_S)}^n\rar{\Delta}&\mathcal{O}_X
	\end{tikzcd}
\end{equation*}
where $i=1,2$ and such that $\id=\Delta\circ d_1=\Delta\circ d_2$.

The \emph{the sheaf of logarithmic differential operators of order $\leq n$} is then defined as  
\[\mathscr{D}^{\leq n}_{(X,M_X)/(S,M_S)}=\HHom_{\mathcal{O}_X}(\mathscr{P}^{n}_{(X,M_X)/(S,M_S)},\mathcal{O}_X),
\] and the \emph{sheaf of logarithmic differential operators} as \[\mathscr{D}_{(X,M_X)/(S,M_S)}=\bigcup_n \mathscr{D}_{(X,M_X)/(S,M_S)}^{\leq n}.\]
\end{Definition}

\begin{Proposition}\label{prop:localFreeness}
	If $f:(X,M_X)\rightarrow (S,M_S)$ is a log smooth morphism of fs log
	schemes, then for every $n\geq 0$, the sheaf of logarithmic principal
	parts $\mathscr{P}^n_{(X,M_X)/(S,M_S)}$ is locally free of finite rank
	with respect to both its left and right $\mathcal{O}_X$-structures.

	More precisely, let $d_1,d_2:\mathcal{O}_X\rightarrow \mathscr{P}_{(X,M_X)/(S,M_S)}^n$ denote the two structure maps. 
	Let $\bar{x}$ be a geometric point of $X$ and
	let $m_1,\ldots, m_r\in M_{X,\bar{x}}$ be elements such that
	$\dlog(m_1),\ldots, \dlog(m_r)$ freely generate
	$\Omega^1_{(X,M_X)/(S,M_S),\bar{x}}$. Then $d_1(m_i)=d_2(m_i)\cdot u_i$ with $u_i\in (\mathscr{P}_{(X,M_X)/(S,M_S)}^n)^\times_{\bar{x}}$.
	The assignment $m_i\mapsto u_i$ extends to a functorial morphism of monoids $\mu:M_X\rightarrow (\mathscr{P}^n_{(X,M_X)/(S,M_S)},\cdot)$
	and $\mathscr{P}^n_{(X,M_X)/(S,M_S),\bar{x}}$ is freely generated as
	either left- or right-$\mathcal{O}_{X,\bar{x}}$-module by monomials of
	degree $\leq n$ in $1-\mu(m_1),\ldots, 1-\mu(m_r)$. 
\end{Proposition}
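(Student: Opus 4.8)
The plan is to first record an auxiliary homomorphism of monoids $\mu$ together with its formal properties, then reduce the whole statement to the claim that a symbol map on associated graded rings is an isomorphism, and finally prove that claim by passing to an explicit toric chart via Kato's structure theorem. Throughout, write $\mathscr{P}^n:=\mathscr{P}^n_{(X,M_X)/(S,M_S)}$ and $\Omega^1:=\Omega^1_{(X,M_X)/(S,M_S)}$, and let $J:=\ker(\Delta\colon\mathscr{P}^n\to\mathcal{O}_X)$; the case $n=0$ is trivial, so assume $n\ge 1$, whence $J^{n+1}=0$ and $J/J^2\cong\Omega^1$ canonically.

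\emph{The homomorphism $\mu$.} By construction of $\mathscr{P}^n$, the closed immersion $X\hookrightarrow\bSpec\mathscr{P}^n$ defined by $\Delta$ is an exact, nilpotent thickening, so $d_1,d_2\colon M_X\to M_{\mathscr{P}^n}$ agree after composing with the projection $M_{\mathscr{P}^n}\to\overline{M}_{\mathscr{P}^n}$. Hence for $m\in M_X$ the element $d_1(m)d_2(m)^{-1}\in M_{\mathscr{P}^n}^{\gp}$ lies in $\mathcal{O}_{\mathscr{P}^n}^\times$; I define $\mu(m)\in(\mathscr{P}^n)^\times$ to be its image in the structure sheaf. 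Then $d_1(m)=d_2(m)\mu(m)$ in $\mathscr{P}^n$; restricting this identity along $X\hookrightarrow\bSpec\mathscr{P}^n$, where both $d_i(m)$ restrict to $m$, gives $\mu(m)\in 1+J$; and $\mu$ is a homomorphism of monoids $M_X\to(\mathscr{P}^n,\cdot)$ since $d_1,d_2$ are. Functoriality in $(X,M_X)$ is inherited from that of the log diagonal, and unwinding the definition of $\dlog$ through $\mathscr{P}^1$, the class of $1-\mu(m)$ in $J/J^2$ equals $-\dlog(m)$ under $J/J^2\cong\Omega^1$.

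\emph{Reduction to the associated graded.} Since $f$ is log smooth, $\Omega^1$ is locally free of finite rank $r$; in particular $J/J^2$ is coherent, and since $J$ is nilpotent, lifts of generators of $J/J^2$ generate the ideal $J$. Thus the symbol homomorphisms $\sigma_i\colon\Sym^i_{\mathcal{O}_X}\Omega^1\to J^i/J^{i+1}$ assemble into a surjection $\sigma\colon\bigoplus_{i=0}^n\Sym^i_{\mathcal{O}_X}\Omega^1\twoheadrightarrow\Gr^\bullet_J\mathscr{P}^n$, and the left and right $\mathcal{O}_X$-structures on $\Gr^\bullet_J\mathscr{P}^n$ coincide because $d_1(a)-d_2(a)\in J$ for $a\in\mathcal{O}_X$. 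It then suffices to prove $\sigma$ is an isomorphism. Indeed, granting this, for $m_1,\dots,m_r\in M_{X,\bar x}$ with $\dlog(m_i)$ a basis of $\Omega^1_{\bar x}$, the relation $1-\mu(m_i)\equiv-\dlog(m_i)\pmod{J^2}$ shows that the degree-$j$ monomials in $1-\mu(m_1),\dots,1-\mu(m_r)$ map under $\sigma^{-1}$ onto the standard monomial basis of $(\Sym^j\Omega^1)_{\bar x}$; hence the monomials of degree $\le n$ project to an $\mathcal{O}_{X,\bar x}$-basis of $\Gr^\bullet_J\mathscr{P}^n_{\bar x}$ for either module structure, and a routine dévissage along the finite $J$-adic filtration of $\mathscr{P}^n_{\bar x}$ lifts this to an $\mathcal{O}_{X,\bar x}$-basis of $\mathscr{P}^n_{\bar x}$ for either structure. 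The asserted local freeness of finite rank follows as well.

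\emph{The symbol map is an isomorphism.} This is étale-local on $X$, and the formation of $\mathscr{P}^n$ and of $\sigma$ commutes with strict étale base change on $X$ and with strict base change $S'\to S$. So by Kato's structure theorem for log smooth morphisms \cite{Kato/LogSchemes}, I may pass to étale neighbourhoods of $\bar x$ and of $f(\bar x)$ over which there are charts $P\to M_S$, $Q\to M_X$ and an injective homomorphism $\theta\colon P\hookrightarrow Q$ of fs monoids with $Q^{\gp}/P^{\gp}$ finitely generated, its torsion $T$ of order invertible on $X$, and $X\to S\times_{\Spec\Z[P]}\Spec\Z[Q]$ strict étale. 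Using étale invariance and strict base change from $S$ once more, I reduce to computing $\mathscr{P}^n$ for the toric morphism $\Spec\Z[Q]\to\Spec\Z[P]$ base changed to $\mathcal{O}_{X,\bar x}$, and here one computes directly: the exactification of the log diagonal has underlying monoid $(Q^{\gp}/P^{\gp})\oplus Q$, with $Q^{\gp}/P^{\gp}$ as unit group, so $\mathscr{P}^n\cong\mathcal{O}_{X,\bar x}[Q^{\gp}/P^{\gp}]$ modulo the $(n+1)$-st power of its augmentation ideal; writing $Q^{\gp}/P^{\gp}=\Z^r\oplus T$ and choosing $m_1,\dots,m_r\in Q$ lifting a basis of $\Z^r$, the factor $\mathcal{O}_{X,\bar x}[T]$ is (since $|T|$ is invertible and $\mathcal{O}_{X,\bar x}$ is strictly henselian) a finite product of copies of $\mathcal{O}_{X,\bar x}$, split by the idempotents cutting out the augmentation, so that it reduces to $\mathcal{O}_{X,\bar x}$ and drops out, leaving $\mathcal{O}_{X,\bar x}[\epsilon_1,\dots,\epsilon_r]/(\epsilon_1,\dots,\epsilon_r)^{n+1}$ with $\epsilon_i=1-\mu(m_i)$, which is free on the monomials of degree $\le n$ in the $\epsilon_i$. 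Comparing with $\sigma$ in this chart concludes the proof. The formal steps above are routine provided one tracks the left- and right-$\mathcal{O}_X$-structures carefully; the main obstacle is this last computation — exactifying the toric log diagonal and checking that the torsion part $T$, which is exactly where the invertibility hypothesis enters, drops out of $\mathscr{P}^n$, so that the displayed monomial basis serves both module structures simultaneously.
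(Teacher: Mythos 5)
Your proof is correct and follows essentially the same route as the paper, whose proof simply imports the argument of Kato's Prop.~6.5 without divided powers: reduce via the chart criterion for log smoothness to a toric model, identify the exactified log diagonal with the group algebra of $Q^{\gp}/P^{\gp}$, and use invertibility of the torsion order to split it off, leaving the truncated polynomial algebra on the $1-\mu(m_i)$. Your intermediate packaging through the symbol map on the $J$-adic graded ring is a harmless reorganization of that same computation.
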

\begin{proof}
	The proof is completely analogous to the proof of \cite[Prop.~6.5]{Kato/LogSchemes}, forgetting about divided powers.
\end{proof}

\begin{Proposition}\label{prop:surjective}
	Let $(S,M_S)$ be a fs log scheme and
	let $f:(Y,M_Y)\rightarrow (X,M_X)$ be a log smooth $(S,M_S)$-morphism
	of fs log schemes which are log smooth over $(S,M_S)$. 	Then the natural map
	\[\mathscr{D}_{(Y,M_Y)/(S,M_S)}\rightarrow f^*\mathscr{D}_{(X,M_X)/(S,M_S)}\]
	is surjective.
\end{Proposition}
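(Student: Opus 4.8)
The plan is to reduce to a local statement on charts and then compare the explicit descriptions of logarithmic differential operators provided by \Cref{prop:localFreeness}. Since surjectivity of a morphism of $\mathcal{O}_Y$-modules can be checked at geometric points, fix a geometric point $\bar y$ of $Y$ with image $\bar x = f(\bar y)$ in $X$. The key point is to combine the following: by \Cref{prop:localFreeness} applied to $(X,M_X)\to(S,M_S)$, the stalk $\mathscr{D}^{\leq n}_{(X,M_X)/(S,M_S),\bar x}$ is free over $\mathcal{O}_{X,\bar x}$ with a basis dual to monomials of degree $\leq n$ in $1-\mu(m_1),\dots,1-\mu(m_r)$, where $\dlog(m_1),\dots,\dlog(m_r)$ freely generate $\Omega^1_{(X,M_X)/(S,M_S),\bar x}$; and similarly for $(Y,M_Y)\to(S,M_S)$, with free generators coming from $\dlog(n_1),\dots,\dlog(n_s)$ of $\Omega^1_{(Y,M_Y)/(S,M_S),\bar y}$.

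First I would invoke log smoothness of $f:(Y,M_Y)\to(X,M_X)$: this gives an exact sequence
\[
f^*\Omega^1_{(X,M_X)/(S,M_S)}\to \Omega^1_{(Y,M_Y)/(S,M_S)}\to \Omega^1_{(Y,M_Y)/(X,M_X)}\to 0,
\]
in which, because $f$ is log smooth, the first map is (locally, after suitable choice of the $m_i$ and $n_j$) a split injection onto a direct summand freely generated by $\dlog(m_1),\dots,\dlog(m_r)$ — so that we may arrange $n_i = f^\flat(m_i)$ for $i\le r$ and the remaining $\dlog(n_{r+1}),\dots,\dlog(n_s)$ freely generate $\Omega^1_{(Y,M_Y)/(X,M_X),\bar y}$. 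Next I would track how the monoid morphism $\mu$ of \Cref{prop:localFreeness} is compatible with pullback: the functoriality clause there says precisely that the map $\mathscr{P}^n_{(Y,M_Y)/(S,M_S)}\to f^*\mathscr{P}^n_{(X,M_X)/(S,M_S)}$ sends $\mu(n_i)$ to $\mu(m_i)$ for $i\le r$. Dualizing, a monomial differential operator on $X$ in the variables $1-\mu(m_i)$ pulls back to the corresponding monomial operator on $Y$ in the $1-\mu(n_i)$, $i\le r$; since these monomials are part of the free basis of $\mathscr{D}^{\leq n}_{(Y,M_Y)/(S,M_S),\bar y}$, the basis of $f^*\mathscr{D}^{\leq n}_{(X,M_X)/(S,M_S),\bar x}$ is hit, and surjectivity in each order $\le n$ follows; taking the union over $n$ gives the claim.

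The main obstacle is the careful bookkeeping in the second step: making precise, at the level of stalks of sheaves of log principal parts, that the pullback map is dual to an \emph{inclusion} of free $\mathcal{O}$-modules picking out exactly the sub-basis indexed by monomials in the $m_i$'s, $i\le r$. This requires (a) checking that one can simultaneously choose compatible log-differential-basis elements on $Y$ and $X$ — which is where log smoothness of both $f$ and the structure morphisms is essential, via the splitting of the cotangent sequence — and (b) verifying that $\mu$ is genuinely functorial in the strong sense used above, i.e. that $\mu_Y(f^\flat m) = f^*\mu_X(m)$ in $(f^*\mathscr{P}^n_{(X,M_X)/(S,M_S)})^\times$; both are essentially contained in \Cref{prop:localFreeness} and the construction of $\mathscr{P}^n$, but need to be stated cleanly. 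Once this local picture is in place, the passage from order-$n$ surjectivity to surjectivity of $\mathscr{D}_{(Y,M_Y)/(S,M_S)}\to f^*\mathscr{D}_{(X,M_X)/(S,M_S)}$ is immediate, since both sides are filtered unions and the maps are compatible with the filtrations.
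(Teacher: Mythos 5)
Your proposal is correct and follows essentially the same route as the paper: using local freeness of the log principal parts (\Cref{prop:localFreeness}) one reduces to showing that $f^*\mathscr{P}^n_{(X,M_X)/(S,M_S)}\rightarrow \mathscr{P}^n_{(Y,M_Y)/(S,M_S)}$ is injective with image locally a direct summand, which follows from the splitting of the log cotangent sequence for the log smooth $f$ together with the compatible local bases given by monomials in the $1-\mu(m_i)$, and then one dualizes. The only slip is directional: the natural functoriality map on principal parts goes from $f^*\mathscr{P}^n_{(X,M_X)/(S,M_S)}$ to $\mathscr{P}^n_{(Y,M_Y)/(S,M_S)}$ (you stated it once in the opposite direction), and it is the dual of this map that yields $\mathscr{D}_{(Y,M_Y)/(S,M_S)}\rightarrow f^*\mathscr{D}_{(X,M_X)/(S,M_S)}$ exactly as you describe.
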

\begin{proof}
	The argument is almost the same as in the non-log case and essentially in \cite{Montagnon}. We recall it for completeness.

	By definition we have
	\[\mathscr{D}^{\leq n}_{(X,M_X)/(S,M_S)}=\HHom_{\mathcal{O}_X}(\mathscr{P}^n_{(X,M_X)/(S,M_S)},\mathcal{O}_X),\]
	where $n\geq 0$ and $\mathscr{P}^n_{(X,M_X)/(S,M)}$ is the sheaf of
	the $n$-th log principal parts.
	As $(X,M_X)$ is log smooth over $(S,M_S)$, the $\mathcal{O}_X$-modules
	$\mathscr{P}^n_{(X,M_X)/(S,M_S)}$ are locally free with respect to both
	their left- and right-$\mathcal{O}_X$-structure. It follows that it is
	enough to show that for every $n\geq 0$ the canonical morphism
	\begin{equation}\label{eq:principal-parts-map}
		f^*\mathscr{P}_{(X,M_X)/(S,M_S)}^n\rightarrow
		\mathscr{P}^n_{(Y,M_Y)/(S,M_S)}
	\end{equation}
	is injective. This follows from the fact (\cite[3.12]{Kato/LogSchemes})
	that the natural morphism
	\begin{equation}
		\label{eq:split-map}f^*\Omega^1_{(X,M_X)/(S,M_S)}\rightarrow \Omega_{(Y,M_Y)/(S,M_S)}^1
	\end{equation}
	is injective and that its image is locally a direct summand.

	Indeed, \'etale locally on $X$, there exist sections $m_1,\ldots, m_r$ of $M_X$,
	such that $\dlog(m_1)\ldots, \dlog(m_r)$ freely generate
	$\Omega_{(X,M_X)/(S,M_S)}^1$. As \eqref{eq:split-map} is locally split, this means that locally there are sections
	$m'_{r+1},\ldots, m'_{d}$ of $M_Y$, such that
	$\Omega_{(Y,M_Y)/(S,M_S)}^1$ is freely generated by \[\dlog(f^*m_1)
	,\ldots, \dlog (f^*m_r), \dlog m'_{r+1},\ldots, \dlog m'_d.\]

	Finally, by \Cref{prop:localFreeness} there are natural maps
	\[\mu_{X}:M_X\rightarrow\mathscr{P}^n_{(X,M_X)/(S,M_S)}
	,\quad\mu_{Y}:M_Y\rightarrow \mathscr{P}^n_{(Y,M_X)/(S,M_S)},\]
	such that 
	\begin{enumerate}[label={(\roman*)}]
		\item $\mathscr{P}^n_{(X,M_X)/(S,M_S)}$ is freely generated by
			the set of monomials of degree $\leq n$ in
			\[1-\mu_{X}(m_1),\ldots, 1-\mu_{X}(m_r),\]
		\item $\mathscr{P}^n_{(Y,M_Y)/(S,M_S)}$ is freely generated by
			the set of monomials of degree $\leq n$ in
			\[1-\mu_{Y}(f^*m_1),\ldots,
				1-\mu_{Y}(f^*m_r),1-\mu_{Y}(m'_{r+1}),\ldots,
				1-\mu_{Y}(m'_d).\]
		\item \[f^*\mathscr{P}_{(X,M_X)/(S,M_S)}^n\rightarrow
		\mathscr{P}^n_{(Y,M_Y)/(S,M_S)}\]
	is given by $1-\mu_{X}(m_i)\mapsto 1-\mu_{Y}(f^*m_i)$.
	\end{enumerate}
	This shows that \eqref{eq:principal-parts-map} is injective and that its image locally is a
direct summand of $\mathscr{P}_{(Y,M_Y)/(S,M_S)}^n$.\end{proof}

\begin{Corollary}\label{cor:checkRSafterPullback}
	Let $k$ be an algebraically closed field, let $X,Y$  be smooth, separated, finite type $k$-schemes with good partial compactifications $(X,\overline{X})$, $(Y,\overline{Y})$, and write
	 $D_X:=(\overline{X}\setminus X)_{\red}$ and
	 $D_Y:=(\overline{Y}\setminus Y)_{\red}$. Let $f:Y\rightarrow X$ be a morphism which extends
	to a morphism $\bar{f}:\overline{Y}\rightarrow \overline{X}$ with the following properties:
	\begin{enumerate}[label={\emph{(\alph*)}}, ref={(\alph*)}]
		\item\label{ass:boundary} $\bar{f}(\overline{Y})$ contains all generic points of $D_X$.
		\item $\bar{f}$ is log smooth with respect to the log structures on $\overline{Y}$ and $\overline{X}$ defined by $D_Y$, resp. $D_X$.				
	\end{enumerate}
	If $E$ is a stratified bundle on $X$ such that $f^*E$ is
	$(Y,\overline{Y})$-regular singular, then $E$ is $(X,\overline{X})$-regular singular.
\end{Corollary}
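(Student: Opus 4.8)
The plan is to translate regular singularity into a coherence property via \Cref{lemma:rs-criterion}, then reduce to the local situation at a generic point of $D_X$, and there exploit that a log smooth morphism over a discrete valuation ring is faithfully flat.

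Fix a locally free coherent $\mathcal{O}_{\overline X}$-module $\overline E$ extending $E$, and write $j_X\colon X\hookrightarrow\overline X$, $j_Y\colon Y\hookrightarrow\overline Y$. By \Cref{lemma:rs-criterion} it suffices to prove that the $\mathscr{D}_{\overline X/k}(\log D_X)$-submodule $G\subset j_{X,*}E$ generated by $\overline E$ is $\mathcal{O}_{\overline X}$-coherent; likewise, since $f^*E$ is $(Y,\overline Y)$-regular singular and $\bar f^*\overline E$ is a locally free coherent extension of $f^*E$, the $\mathscr{D}_{\overline Y/k}(\log D_Y)$-submodule $H\subset j_{Y,*}(f^*E)$ generated by $\bar f^*\overline E$ is $\mathcal{O}_{\overline Y}$-coherent. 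Coherence of $G$ is local on $\overline X$ and automatic on $X$ (there $G=E$ is a coherent $\mathscr{D}_{X/k}$-module), and — using \cite[Thm.~5.2]{Kindler/FiniteBundles} together with the fact that a reflexive coherent sheaf on the smooth scheme $\overline X$ is recovered from its restriction to the complement of any codimension $\ge 2$ closed subset — I would reduce to checking coherence of $G$ at the generic points of $D_X$. So fix such a point $\eta$, put $R:=\mathcal{O}_{\overline X,\eta}$ (a discrete valuation ring, with uniformizer $t$ a local equation of $D_X$), and note that $G_\eta$ is the $(\mathscr{D}_{\overline X/k}(\log D_X))_\eta$-submodule of $(j_{X,*}E)_\eta$ generated by $\overline E_\eta$; the goal becomes to show $G_\eta$ is finitely generated over $R$.

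By hypothesis~\ref{ass:boundary} there is $\zeta\in\overline Y$ with $\bar f(\zeta)=\eta$; set $R':=\mathcal{O}_{\overline Y,\zeta}$. The morphism of log structures carries the local equation $t$ of $D_X$ to an element of $M_{\overline Y,\zeta}$, which by the explicit local description of these log structures (\Cref{rem:log-smoothness}) is a unit times a monomial in local equations of the branches of $D_Y$ through $\zeta$; since $\bar f^\#(t)\in\mathfrak{m}_\zeta$ this monomial is nontrivial, so $\zeta\in D_Y$ and $\bar f^\#(t)$ is a nonzerodivisor in the regular local ring $R'$. Hence $\bar f^\#\colon R\to R'$ is injective, $R'$ is torsion-free — therefore flat — over the discrete valuation ring $R$, and being a local homomorphism it is faithfully flat. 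Now I would compare $G_\eta\otimes_R R'$ with the stalk $H_\zeta$. By \Cref{prop:surjective}, applied to $\bar f$ (log smooth over $\Spec k$ with its trivial log structure), taking stalks at $\zeta$ yields a surjection $(\mathscr{D}_{\overline Y/k}(\log D_Y))_\zeta\twoheadrightarrow R'\otimes_R(\mathscr{D}_{\overline X/k}(\log D_X))_\eta$ compatible with the actions. Since $R\to R'$ is flat, $G_\eta\otimes_R R'$ is the $R'\otimes_R(\mathscr{D}_{\overline X/k}(\log D_X))_\eta$-submodule of $(j_{X,*}E)_\eta\otimes_R R'\subset(j_{Y,*}f^*E)_\zeta$ generated by $\bar f^*\overline E_\zeta$, and by the surjection this coincides with the $(\mathscr{D}_{\overline Y/k}(\log D_Y))_\zeta$-submodule generated by $\bar f^*\overline E_\zeta$, i.e.\ with $H_\zeta$. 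As $H$ is coherent, $G_\eta\otimes_R R'\cong H_\zeta$ is finitely generated over $R'$, so faithfully flat descent along $R\to R'$ forces $G_\eta$ to be finitely generated over $R$. Running this over all generic points $\eta$ of $D_X$ gives that $G$ is coherent, whence $E$ is $(X,\overline X)$-regular singular by \Cref{lemma:rs-criterion}.

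The main obstacle is the passage to codimension one: it is precisely the reduction to the generic points of $D_X$ that makes hypothesis~\ref{ass:boundary} usable, and one-dimensionality of the base $R$ is essential, since a log smooth morphism over a higher-dimensional base (think of a log blow-up) need not be flat, whereas over the discrete valuation ring $R$ the structure of the pulled-back log structure upgrades log smoothness to faithful flatness. The remaining ingredients — functoriality of the $\mathscr{D}(\log)$-action under pullback, compatibility of the relevant base-change maps with this action, and its interaction with flat base change of the generated submodules — are formal consequences of the constructions of \Cref{sec:log-diffOps}.
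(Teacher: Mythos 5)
Your argument is correct and is essentially the paper's proof: both rest on \Cref{lemma:rs-criterion}, the surjectivity of \Cref{prop:surjective} to identify the pullback of the $\mathscr{D}_{\overline{X}/k}(\log D_X)$-submodule generated by $\overline{E}$ with the $\mathscr{D}_{\overline{Y}/k}(\log D_Y)$-submodule generated by $\bar{f}^*\overline{E}$, and faithfully flat descent of coherence, with flatness extracted from hypothesis \ref{ass:boundary} via the discrete valuation rings at the generic points of $D_X$. The only difference is organizational: you run the descent stalkwise at each generic point of $D_X$ and justify the codimension-one reduction by a reflexive-extension argument, whereas the paper globalizes by shrinking $\overline{Y}$ to the flat locus and replacing $\overline{X}$ by the open image, citing \cite[Prop.~4.3]{Kindler/FiniteBundles} for the same reduction.
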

\begin{Remark}
	We keep the notations from \Cref{cor:checkRSafterPullback}.
	One of the main results of \cite{Kindler/FiniteBundles} is that a
	stratified bundle $E$ on $X$ with finite monodromy is $(X,\overline{X})$-regular singular
	if and only if the associated Picard-Vessiot torsor on $X$ (which is just a Galois covering)
	is tamely ramified along $D_X$. Thus, for stratified bundles with finite
	monodromy, \Cref{cor:checkRSafterPullback} is a special case of
	\cite[Prop.~7.7]{Abbes-Saito/Cleanliness}, as first lines of the proof of the theorem show that we can assume that $\bar{f}$ is faithfully flat.
\end{Remark}
\begin{proof}[Proof of \Cref{cor:checkRSafterPullback}]
	Without loss of generality we can assume that $X$ and $Y$ are connected.
	Let $\overline{Y}'\subset \overline{Y}$ be the largest open subset on which
	$\bar{f}$ is flat (\cite[Thm.~11.1.1]{EGA4}). Then $Y\subset \overline{Y}'$, as $f$ is smooth. Moreover, $\bar{f}(\overline{Y}')\subset \overline{X}$ is
	open, $X\subset \bar{f}(\overline{Y}')$,  and according to assumption \ref{ass:boundary},
	$\bar{f}(\overline{Y}')$ contains all generic points of $D_X$. Indeed, if $\eta\in \overline{X}$ is a generic point of $D_X$, then $\mathcal{O}_{\overline{X},\eta}$ is a discrete valuation ring, and thus the dominant morphism $\bar{f}$ is flat in a neighborhood of $\bar{f}^{-1}(\eta)$. If $E$
	is a stratified bundle on $X$, then it is $(X,\overline{X})$-regular
	singular if and only if it is $(X,
	\bar{f}(\overline{Y}'))$-regular singular (\cite[Prop.~4.3]{Kindler/FiniteBundles}). Similarly, if $f^*E$ is $(Y,\overline{Y})$-regular singular, then it is also $(Y,\overline{Y}')$-regular singular. Replacing $\overline{Y}$ by
	$\overline{Y}'$, and	$\overline{X}$ by $\bar{f}(\overline{Y}')$, we may assume that $\bar{f}$ is
	faithfully flat.

	Now the argument is almost identical to the proof of
	\cite[Prop.~6.1]{Kindler/Evidence}, using \Cref{prop:surjective}
	instead of the analogous surjectivity statement for the relative Frobenius morphism.
	We fix notations as in the following diagram:
	\begin{equation*}
		\begin{tikzcd}
			Y\rar[hook]{i}\dar{f}&\overline{Y}\dar{\bar{f}}\\
			X\rar[hook]{j}&\overline{X}
		\end{tikzcd}
	\end{equation*}
	Let $E$ be a stratified bundle on $X$ and assume that $f^*E$ is
	regular singular. Fix a locally free extension $E'\subset j_*E$ of $E$ to
	$\overline{X}$. Denote by $\overline{E}\subset j_*E$ the $\mathscr{D}_{\overline{X}/k}(\log D_X)$-submodule of $j_*E$ generated by $E'$; in other words
	$\overline{E}$ is the
	image of the evaluation map 
	\begin{equation}\label{eq:eval1}
		\mathscr{D}_{\overline{X}/k}(\log
		D_X)\otimes_{\mathcal{O}_{\overline{X}}} E'\rightarrow j_*E.
	\end{equation}
	The $\mathcal{O}_{\overline{X}}$-module $\overline{E}$ is quasi-coherent, and to
	show that $E$ is regular singular, it suffices to show that
	$\overline{E}$ is coherent.  As $\bar{f}$ is faithfully flat by assumption,
	it is enough to show that $\bar{f}^*\overline{E}$ is coherent. Note that
	$\bar{f}^*\overline{E}$ is the image of the pullback of \eqref{eq:eval1} along
	$\bar{f}$.

	Next,  write 
	$G$ for the $\mathscr{D}_{\overline{Y}/k}(\log D_Y)$-submodule of $i_*f^*E=\bar{f}^*j_*E$
	spanned by $\bar{f}^*E'$. In other words, $G$ is the image of the evaluation
	map
	\begin{equation}\label{eq:eval2}\mathscr{D}_{\overline{Y}/k}(\log D_Y)\otimes_{\mathcal{O}_{\overline{Y}}} \bar{f}^*E'\rightarrow
			\bar{f}^*j_*E.\end{equation}
	Note that $(\bar{f}^*E')|_Y\cong f^*E$. Thus, as $f^*E$ is $(X,\overline{X})$-regular singular by
	assumption, \cref{lemma:rs-criterion} shows that $G$ is coherent. We show that $G\cong \bar{f}^*\overline{E}$.

	By the definition of the $\mathscr{D}_{\overline{Y}/k}(\log D_Y)$-action on
	$\bar{f}^*j_*E=i_*f^*E$, the maps \eqref{eq:eval1} and \eqref{eq:eval2} fit into the following
	commutative diagram
	\begin{equation*}
		\begin{tikzcd}
			\bar{f}^*\left(\mathscr{D}_{\overline{X}/k}(\log D_X)\otimes_{\mathcal{O}_{\overline{X}}} E'\right)\rar{\bar{f}^*(\text{\eqref{eq:eval1}})}&\bar{f}^*j_*E\\
			\bar{f}^*\mathscr{D}_{\overline{X}/k}(\log D_X)\otimes_{\bar{f}^{-1}\mathcal{O}_{\overline{X}}} \bar{f}^{-1}E'\uar[-, double equal sign distance ]\\
			\mathscr{D}_{\overline{Y}/k}(\log D_Y)\otimes_{\bar{f}^{-1}\mathcal{O}_{\overline{X}}}\bar{f}^{-1}E'\uar{\gamma\otimes \id}\\
			\mathscr{D}_{\overline{Y}/k}(\log D_Y)\otimes_{\mathcal{O}_{\overline{Y}}}\bar{f}^*E' \uar[-, double equal sign distance ]\rar{\text{\eqref{eq:eval2}}}&i_*f^*E\ar[-, double equal sign distance]{uuu},
		\end{tikzcd}
	\end{equation*}
	where
	\[\gamma:\mathscr{D}_{\overline{Y}/k}(\log D_Y)\rightarrow
		\bar{f}^*\mathscr{D}_{\overline{X}/k}(\log D_X)\]
	is the functoriality morphism for logarithmic differential operators.
	According to \Cref{prop:surjective}, this map is surjective, so the
	images of $\bar{f}^*$\eqref{eq:eval1} and \eqref{eq:eval2} agree, which shows that
	$G=\bar{f}^*\overline{E}$.
\end{proof}

\begin{Example}\label{ex:logSmooth} Here are two fundamental (equicharacteristic) examples for log smoothness. Let
	$k$ be a field of characterstic $p>0$,
	$R=\Spec k\llbracket t \rrbracket$, $S=\Spec R$, $K=k\llparen
	t\rrparen$. 
	\begin{enumerate}
		\item\label{ex:log-smooth-a} Define
			\[X_1:=\Spec R[x_1,\ldots, x_n]/(x_1^{m_1}\cdot\ldots\cdot x_n^{m_n}-t)\]
		with $m_1,\ldots, m_n\geq 0$ and at least one $m_i$ prime to $p$.
		\item\label{ex:log-smooth-b} Define
			\[X_2:=\Spec R[u^{\pm 1}, x_1,\ldots, x_n]/(ux_1^{\ell_1}\cdot\ldots\cdot x_n^{\ell_n}-t)\]
		with $\ell_1,\ldots, \ell_n\in \Z$. Note that if there exists one $\ell_i$
		which is prime to $p$, then we can replace $X_2$ by a Kummer covering
		to arrive in case \ref{ex:log-smooth-a}. Thus we assume that $\ell_1,\ldots, \ell_n\in p\Z$.
\end{enumerate}
Equip $S, X_1$ and $X_2$ with the log structures given by $t=0$. 
	Then $X_1\rightarrow S$ and $X_2\rightarrow S$ are both log smooth. 
	Indeed, $S$ admits the chart $\N\rightarrow R$, $m\mapsto t^m$, and
	$X_1$ admits the chart $\N^n\rightarrow H^0(X_1,\mathcal{O}_{X_1})$,
	$(a_1,\ldots, a_n)\mapsto \prod_{i=1}^n x_i^{a_i}$. The morphism
	$\phi_1:\N\rightarrow \N^n, a\mapsto (am_1,\ldots, am_n)$ induces a
	commutative diagram
	\begin{equation*}
		\begin{tikzcd}
			X_1\rar\dar& \Spec \Z[\N^n]\dar{\phi_1}\\
			S\rar&\Spec \Z[\N],
		\end{tikzcd}
	\end{equation*}
	which is easily checked to be a chart for the morphism $X_1\rightarrow S$. 

	Let $\phi_1^{\gp}:\Z\rightarrow \Z^n$ be the homomorphism of abelian groups associated with $\phi_1$. By \cite[Thm.~3.5]{Kato/LogSchemes}, $X_1$ is log smooth over $S$ if
	and only if $\coker(\phi^{\gp}_1)$ has no $p$-torsion, and if the induced
	morphism $X_1\rightarrow S\times_{\Spec \Z[\N]}\Spec \Z[\N^n]$ is
\'etale. These conditions are easily seen to be satisfied 
 if one of the
$m_1,\ldots, m_n$ is coprime to $p=\Char(k)$.

For $X_2$, we can choose the chart \[\Z\oplus \N^n\rightarrow H^0(X_2,\mathcal{O}_{X_2}), (a_0,a_1,\ldots, a_n)\mapsto u^{a_0}\prod_{i=1}^nx_i^{a_i},\]	and $\phi_2:\N\rightarrow \Z\oplus\N^n$, $\phi_2(a)=(a,a\ell_1,\ldots, a\ell_n)$, induces a chart for the morphism $X_2\rightarrow S$. We see that $X_2\rightarrow S$ is log smooth.

	We conclude the example by verifying \Cref{prop:surjective} in this situation for $X_2=\Spec k\llbracket t\rrbracket[u^{\pm 1},x]/(ux^{\ell}-t)$.
	The (completed) ring of differential operators $\mathscr{D}_{S/k}(\log(t))$ is isomorphic to $\bigoplus_{m\geq 0}k\llbracket t\rrbracket\delta^{(m)}_t$, where $\delta^{(m)}_{t}$ is the differential operator such that $\delta^{(m)}_t(t^r)=\binom{r}{m}t^r$ (i.e., the characteristic free version of $\frac{t^m}{m!}\frac{\partial^m}{\partial t^m}$).
The functions $x$ and $u$ are a system of
	coordinates for $X_2$ relative to $k$, and the (completed) ring of differential operators of $X_2$ relative to $k$ is $\bigoplus_{m,n\geq 0} \left(k\llbracket t\rrbracket\delta_u^{(m)} \oplus k\llbracket t\rrbracket\delta_x^{(n)}\right)$. We compute
	\begin{align*}
		\delta_u^{(m)}(t^r)&=\delta_u^{(m)}(u^rx^{\ell r})\\
		&=\delta_u^{(m)}(u^r)x^{\ell r}\\
		&=\binom{r}{m}u^rx^{\ell r}\\
		&=\binom{r}{m}t^r.
	\end{align*}
	This shows that the map
	\[\bigoplus_{m,n\geq 0} \left(k\llbracket t\rrbracket\delta_u^{(m)} \oplus k\llbracket t\rrbracket\delta_x^{(n)}\right)\rightarrow \bigoplus_{m\geq 0} k\llbracket t\rrbracket\delta_t^{(m)}\]
	induced by $X_2\rightarrow S$ is surjective.
\end{Example}
\section{An Example of Raynaud}\label{sec:ExRa}
The log smoothness condition in \Cref{cor:checkRSafterPullback} cannot be dropped, as the following example shows. It is a variant of \cite[Rem.~9.4.3 (c)]{Raynaud/Picard} and is contained in a letter from M.~Raynaud to H.~Esnault dated May 15, 2009. 

We will show: there is a smooth affine curve $S$ over an algebraically closed field $k$ of characteristic $p>0$, a closed point $s\in S$, a morphism $f:X\rightarrow S$, and a finite map $g:S'\rightarrow S$, such that
\begin{enumerate}
	\item $X$ is regular, hence smooth over $k$,
	\item $f$ is faithfully flat,
	\item If $D=f^{-1}(s)_{\red}$, then $X\setminus D\rightarrow S\setminus \{s\}$ is smooth, projective, with  geometrically connected fibers,
	\item $g:S'\rightarrow S$ is \'etale over $S\setminus \{s\}$,
	\item $g:S'\rightarrow S$ is wildly ramified over $s$,
	\item If $X'$ is the normalization of $X\times_SS'$, then the finite map $g_X:X' \rightarrow X$ is \'etale.
\end{enumerate}
The wildly ramified covering $g$ pulls back to an \'etale covering of $X$, and a fortiori to a tamely ramified covering.
Translated into the language of stratified bundles, this means that if $\Sigma\subset D$ is a $0$-dimensional subset such that
$D\setminus \Sigma$ is regular, then the stratified bundle
$(g_*\mathcal{O}_{S'})|_{S\setminus \{s\}}$, which is irregular over $s$, pulls
back to a stratified bundle on $X\setminus D$ which extends to $X$. In
particular, this pullback is  $(X\setminus \Sigma, D\setminus \Sigma)$-regular singular, so the conclusion of \Cref{cor:checkRSafterPullback} fails.

Let $S$ be an affine smooth curve over an algebraically closed field of characteristic $p>0$ and let
${E}\rightarrow S$ be a family of ordinary elliptic curves. In particular,
${E}$ is an abelian $S$-scheme (the N\'eron model of its generic fiber). After
possibly replacing $S$ by a \'etale open, we may assume that ${E}$ contains an
$S$-subgroup scheme $G$ which is isomorphic to $(\Z/p\Z)_S$. Indeed, according
to \cite[12.3]{KatzMazur}, if $E_{p}$ is the kernel of $E\xrightarrow{\cdot p}E$,
then $E_p$ sits in a short exact sequence $0\rightarrow\ker(F)\rightarrow
E_p\rightarrow \ker(V)\rightarrow 0$, where $F$ and $V$ denote the absolute
Frobenius and Verschiebung. As $E$ is ordinary, $V$ is \'etale and $\ker(V)$ is
a cyclic finite \'etale $S$-group scheme of order $p$. This means that there is
a finite separable extension $L$ of $K(S)$ over which
$(E_p)_L=\ker(F)_L\times_L (\Z/p\Z)_L$. We replace $S$ by the integral closure
of a suitable open subset in $L$, and from now on assume that there is a
$S$-subgroup scheme $G\subset E$ with $G\cong (\Z/p\Z)_S$.

Pick a closed point $s\in S$, write $K=K(S)$, $K_s:=\Frac(\widehat{\mathcal{O}_{S,s}})$, fix separable closures $K^{\sep}\subset (K_s)^{\sep}$. Consider the following commutative diagram of Galois cohomology groups:
\begin{equation*}
	\begin{tikzcd}
		H^1(K,G(K^{\sep}))\dar\ar[dashed]{dr}{\phi}\rar&H^1(K,E(K^{\sep}))\dar\\
		H^1(K_s,G(K_s^{\sep}))\rar&H^1(K_s,E_{K_s}(K_s^{\sep}))
	\end{tikzcd}
\end{equation*}
\begin{Claim}
	There exists an element $\alpha\in H^1(K,G(K^{\sep}))$ of order $p$, such that
	$\phi(\alpha)\neq 0$.
\end{Claim}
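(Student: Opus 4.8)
The plan is to produce $\alpha$ essentially by hand, exploiting the fact that $G \cong (\Z/p\Z)_S$ is constant so that $H^1(K, G(K^{\sep})) = H^1(K, \Z/p\Z)$ can be computed via Artin--Schreier theory, i.e.\ $H^1(K,\Z/p\Z) \cong K/\wp(K)$ where $\wp(a) = a^p - a$. An element of order $p$ is thus represented by some $a \in K$ with $a \notin \wp(K)$; it becomes wildly ramified at $s$ precisely when $a \in \mathcal{O}_{S,s}$ cannot be adjusted by $\wp(K)$ to remove a pole, equivalently when its image in $K_s/\wp(K_s)$ is ramified. So the first step is to choose $a \in K$ whose Artin--Schreier class is nontrivial and wildly ramified at $s$ (e.g.\ with a pole of order prime to $p$ at $s$, after possibly shrinking $S$ so that $a$ has no other poles); such $a$ exists since $S$ is an affine curve and $k$ is infinite. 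This guarantees that the image $\phi'(\alpha)$ of $\alpha$ in $H^1(K_s, G(K_s^{\sep})) = K_s/\wp(K_s)$ is nonzero.

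The second and essential step is to upgrade ``$\phi'(\alpha) \neq 0$ in $H^1(K_s, G)$'' to ``$\phi(\alpha) \neq 0$ in $H^1(K_s, E_{K_s})$'', i.e.\ to show the class does not die under the inclusion $G \hookrightarrow E$. Here I would use the long exact sequence
\begin{equation*}
	E(K_s) \xrightarrow{\pi} (E/G)(K_s) \xrightarrow{\partial} H^1(K_s, G(K_s^{\sep})) \xrightarrow{\iota_*} H^1(K_s, E_{K_s}(K_s^{\sep})),
\end{equation*}
so that $\ker(\iota_*) = \operatorname{im}(\partial)$. Thus it suffices to show that $\phi'(\alpha)$ is \emph{not} in the image of $\partial$, i.e.\ does not come from a $K_s$-point of the quotient isogeny $E \to E' := E/G$. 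The quotient $E'$ is again an elliptic curve over $K_s$ with good ordinary reduction (it is the other elliptic curve in the dual isogeny, $E' \to E$ being the Verschiebung-type isogeny), and the connecting map $\partial$ factors through $E'(K_s)/\pi(E(K_s))$, which by the theory of formal groups / good reduction is controlled by the reduction and has image landing in the \emph{unramified} (or at worst tamely ramified) part of $H^1(K_s, \Z/p\Z)$. More precisely: $E(K_s) \to E'(K_s)$ is surjective on the level of formal groups (an isogeny of degree $p$ between formal groups over the complete DVR $\widehat{\mathcal{O}_{S,s}}$, which on the formal group is surjective onto points because the target formal group is $p$-divisible up to the cokernel measured by reduction), and the component group contribution is finite \'etale, hence unramified. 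Therefore every class in $\operatorname{im}(\partial)$ is unramified at $s$, whereas $\phi'(\alpha)$ was constructed to be wildly ramified; contradiction. Hence $\iota_*(\phi'(\alpha)) = \phi(\alpha) \neq 0$.

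I expect the main obstacle to be making precise the claim that $\operatorname{im}(\partial) \subset H^1(K_s,\Z/p\Z)$ consists of unramified classes. The cleanest route is probably: since $E$ has good reduction over $\widehat{\mathcal{O}_{S,s}}$, the isogeny $G \hookrightarrow E \to E'$ extends to the N\'eron (=abelian) models over $\widehat{\mathcal{O}_{S,s}}$ with $G$ extending to the constant group scheme $(\Z/p\Z)$, and the resulting sequence of fppf/\'etale sheaves on $\Spec \widehat{\mathcal{O}_{S,s}}$ gives that $H^1_{\mathrm{\acute{e}t}}(\widehat{\mathcal{O}_{S,s}}, \Z/p\Z) \to H^1(K_s, \Z/p\Z)$ has image exactly the unramified classes, and that the image of $\partial$ is contained in the image of this restriction map — because the obstruction to lifting a $K_s$-point of $E'$ to $E$ is unramified once both are spread out over the DVR and the special fibers are smooth connected (so $E(k) \to E'(k)$ is surjective, $k$ being algebraically closed, killing the component-group discrepancy). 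An alternative, more elementary packaging: compute both sides after base change to $K_s^{\mathrm{unr}}$ and note the class $\alpha$ remains wildly ramified there while $\operatorname{im}(\partial)$ dies, since over the maximal unramified extension $E'(\mathcal{O}^{\mathrm{unr}}) \to$ torsion is surjective onto $E'(K_s^{\mathrm{unr}})$ modulo $\pi(E(K_s^{\mathrm{unr}}))$ by Hensel/smoothness. Either way, the order of $\alpha$ being exactly $p$ is automatic: it is nonzero in $H^1(K,\Z/p\Z)$, a group of exponent $p$.
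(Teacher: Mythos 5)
Your proof is correct, but it takes a genuinely different route from the paper's. The paper never uses the quotient isogeny at this point: writing $R=\widehat{\mathcal{O}_{S,s}}$, it exploits properness and smoothness of $E_R$ to get a Galois-equivariant surjection $E_{K_s}(K_s^{\sep})=E_R(R^{\sep})\twoheadrightarrow E(k(s))$ onto a module with trivial action, hence a map $H^1(K_s,E(K_s^{\sep}))\to\Hom^{\cont}(\Gal(K_s^{\sep}/K_s),E(k(s)))$ whose composition with $H^1(K_s,G(K_s^{\sep}))=\Hom^{\cont}(\Gal(K_s^{\sep}/K_s),\Z/p\Z)$ is the evident injection; thus \emph{every} nonzero local class of order $p$ survives in $H^1(K_s,E_{K_s}(K_s^{\sep}))$, and such a class is then lifted to $K$ via the Katz--Gabber surjection $\Gal(K^{\sep}/K)\twoheadrightarrow\Gal(K_s^{\sep}/K_s)$. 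You instead construct the global class explicitly by Artin--Schreier theory and prove the same key point (injectivity of $H^1(K_s,G)\to H^1(K_s,E_{K_s})$) through the connecting map of $0\to G\to E\to E/G\to 0$. This works, and your argument can be tightened: since $k(s)$ is algebraically closed, $R$ is strictly Henselian, so for $Q\in (E/G)(K_s)=(E/G)_R(R)$ the fiber of $E_R\to (E/G)_R$ over $Q$ is a $G$-torsor over $\Spec R$, hence trivial; therefore $\partial=0$ and the map is injective outright. In particular ``unramified'' classes in $H^1(K_s,\Z/p\Z)$ are zero and every nonzero class is automatically totally wildly ramified, so your careful choice of a prime-to-$p$ pole is only needed to guarantee nontriviality at $s$. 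Two phrasings should be cleaned up or dropped: the formal-group step is garbled (the isogeny is \'etale, so it is an isomorphism on formal groups; $p$-divisibility plays no role) and there is no component-group issue since both curves have good reduction; likewise $K_s^{\unr}=K_s$ here, so the ``alternative packaging'' is vacuous. What your route buys is an explicit $\alpha$ and no appeal to Katz--Gabber; what the paper's route buys is that it avoids introducing $E/G$ at this stage and shows directly that any nonzero element of $H^1(K_s,G(K_s^{\sep}))$ does the job.
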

\begin{proof}
Write $R=\widehat{\mathcal{O}_{S,s}}$ and let $R^{\sep}$ be the
integral closure of $R$ in $K_s^{\sep}$. If $E_R$ is the elliptic curve
on $\Spec R$ obtained by restricting $E$,  then
\[E_R(K_s^{\sep})=E_R(R^{\sep})\twoheadrightarrow E_{R}(k(s)),\] where
$k(s)=k$ is the algebraically closed residue field of $R$. Moreover, if
$E_R(k(s))$ is equipped with the trivial $\Gal(K_s^{\sep}/K_s)$-action,
we obtain a commutative diagram of continuous
$\Gal(K_s^{\sep}/K_s)$-modules
\begin{equation*}
\begin{tikzcd}
	E_{K_s}(K_s^{\sep})\ar[two heads]{rr} && E(k(s))\\
	G(K^{\sep}_s)\uar[hookrightarrow]\rar[-, double equal sign distance ]&G(K_s)\rar[-, double equal sign distance ]&G(k(s))\uar[hookrightarrow]
	\end{tikzcd}
\end{equation*}
and consequently
\begin{equation*}
	\begin{tikzcd}
		H^1(K_s,E(K_s^{\sep}))\rar&\Hom^{\cont}(\Gal(K^{\sep}_s/K_s),E(k(s)))\\
		H^1(K_s,G(K_s^{\sep}))\uar\rar[-, double equal sign distance ]&\uar[hookrightarrow]\Hom^{\cont}(\Gal(K_s^{\sep}/K_s),G(k(s)))\\
	\end{tikzcd}
\end{equation*}
This shows we can find $\alpha_s\in H^1(K_s, G(K_s^{\sep}))$ of order $p$ with nontrivial
image in $H^1(K_s, E(K_s^{\sep}))$.

Finally, note that the theorem of Katz-Gabber
(\cite[Thm.~1.7.2]{Katz/LocalToGlobal}) shows that there is a
surjective morphism $\Gal(K^{\sep}/K)\twoheadrightarrow \Gal(K^{\sep}_s/K_s)$,
so we can lift $\alpha_s$ to  $\alpha\in
H^1(K,G(K^{\sep}))\cong\Hom^{\cont}(\Gal(K^{\sep}/K), \Z/p\Z)$ of order $p$, such that
$\phi(\alpha)\neq 0$.
\end{proof}

The element $\alpha$ corresponds to an Artin-Schreier extension of $K(S)$, totally
wildly ramified in $s$. Let $S'$ be the integral closure of $S$ in this Artin-Schreier extension. After
possibly shrinking $S$ around $s$, we can assume that $g:S'\rightarrow S$ is
\'etale away from $s$.

The image of $\alpha$ in $H^1(K,E_K(K^{\sep}))$ corresponds to an $E_K$-torsor
$X_K\rightarrow \Spec K$ which becomes trivial on $K(S')$. Let $f:X\rightarrow S$ be the minimal regular model of
$X_K$. In \cite[p.~82--83]{Raynaud/Driebergen} it is shown that $X$ can be constructed as a quotient of $E_{S'}$ by a finite flat equivalence relation. After perhaps shrinking $S$ around
$s$, we may assume that $f:X\rightarrow S$ is smooth away from $s$.
Let $X'\rightarrow S'$ be the normalization of $X_{S'}$. Then $X'\cong E_{S'}$. Indeed, the quotient map $E_{S'}\rightarrow X$ induces a birational, integral map  $E_{S'}\rightarrow X_{S'}$. As $E_{S'}$ is normal (even regular), this means that $E_{S'}\cong X'$.

We restrict this situation to a local setup:
Let $s'\in S'$ be a point over $s$, and write $R:=\widehat{\mathcal{O}_{S,s}}$,
$R':=\widehat{\mathcal{O}_{S',s'}}$.  We obtain the following commutative
diagram:
\begin{equation*}
	\begin{tikzcd}
		X_{R}\dar&X_{R'}\lar\dar&X'_{R'}\lar\ar{dl}\\
		\Spec R&\Spec R'\lar
	\end{tikzcd}
\end{equation*}

As $S$ is excellent,  $X'_{R'}\rightarrow X_{R'}$ is the normalization of $X_{R'}$. Indeed, we can proceed as in \cite[\href{http://stacks.math.columbia.edu/tag/07TD}{Tag 07TD}]{stacks-project}: $X'_{R'}$ is integral over $X_{R'}$, $X'_{R'}$ is normal by \cite[Prop.~6.14.1]{EGA4} , and as $X'_{R'}\rightarrow X'$ is flat, $X'_{R'}\rightarrow X_{R'}$ is birational. This means we have a factorization as in the following diagram:
\begin{equation*}
	\begin{tikzcd}		(X_{R'})^{\text{norm}}\ar[swap]{dr}{\text{normalization}}\ar[dashed]{r}{\cong}
		&X'_{R'}\dar{\text{birat.}}\rar{\text{flat}}&X'\dar{\text{normalization}}\\
		&X_{R'}\rar&X_{S'}
	\end{tikzcd}
\end{equation*}

It now suffices to prove that the finite map
\begin{equation}\label{ex:raynaud:map}
	E_{R'}\cong X'_{R'}\rightarrow
X_R\end{equation} is \'etale.

Note that $X_R$ is the minimal regular model of $X_{K_s}$
(\cite[Prop.~9.3.28]{Liu}) over $R$, and that
$X_{K_s}$ is the $E_{K_s}$-torsor associated with the cohomology class
$\alpha_s$ from above. By \cite[p.~82--83]{Raynaud/Driebergen}, the $E_{K_s}$-action on $X_{K_s}$ extends to an $E_{R}$-action on $X_R$.  
If $X_s$ and $E_s$ are the
special fibers of $X$ and $E$, then the induced action of the abelian variety $E_s$ on $X_s$
is set-theoretically transitive. This means that $(X_s)_{\red}$ is isomorphic to
$E_{s}/H$ where $H$ is some subgroup scheme of $E_s$.

Recall that we assume the existence of a finite \'etale
group scheme $G\subset E$, $G\cong (\Z/p\Z)_S$. Write $F:=E/G$. By construction
$\alpha_s$ is the image of a $G_{K_s}$-torsor, so there is a canonical
$E_{K_s}$-equivariant morphism $u_{K_s}:X_{K_s}\rightarrow F_{K_s}$. 
By functoriality of the minimal regular model, it extends to an $E_R$-equivariant
morphism  $u_R:X_R\rightarrow F_R$. We obtain an $E_s$-equivariant morphism 
\[(u_s)_{\red}:E_s/H\cong (X_s)_{\red}\rightarrow (F_s)_{\red}\cong E_s/G_s.\]
This shows that $H\subset G_s$ and that $H$ is finite \'etale over $k(s)$.
On the other hand, $G_s$ acts trivially on $(X_s)_{\red}$, as $X_R\rightarrow
\Spec R$ factors through $\Spec R'\rightarrow \Spec R$, and the residue field
extension of $R\subset R'$ is trivial. Thus $H=G_s$.

Finally, the map \eqref{ex:raynaud:map} is finite of degree $p$,
and $X'_{R'}\cong E_{R'}$. We have seen that on reduced special fibers, this map
induces an \'etale map of degree $p$:
\[E_{s'}=E_{s}\rightarrow (X_s)_{\red}=E_s/G_s.\]
This implies that \eqref{ex:raynaud:map} is \'etale.

\section{An exactness criterion for sequences of affine group schemes}\label{sec:ExCri}

In this sectio we establish an additional
technical result, which characterizes the exactness of a sequence of affine
$k$-group schemes in terms of the exactness of the induced sequences of their algebraic
quotients.
\begin{Definition}
Let $(\mathcal{T},\omega)$ be a neutral Tannakian category over a field $k$ and
let $\mathcal{S}$ be a subset of its objects. 
\begin{enumerate}
	\item 	The \emph{$\mathcal{T}$-span $\left<
				\mathcal{S}\right>_{\otimes}^{\mathcal{T}}$ of
			$\mathcal{S}$} is the smallest full sub-Tannakian
		category of $\mathcal{T}$ containing $\mathcal{S}$ which is
		closed under  subquotients and isomorphisms. When the ambient
		category is clear, we also drop the superscript $\mathcal{T}$
		and write $\left<\mathcal{S}\right>_{\otimes}$.
\item We write $\Pi_{\mathcal{S}}^{\mathcal{T}}$, or simply $\Pi_{\mathcal{S}}$
	when no confusion is possible, for the affine $k$-group scheme
	associated with $(\langle \mathcal{S}\rangle_\otimes,\omega)$ via the
	Tannaka formalism. If $\mathcal{S}=\{S\}$ is a singleton, we simply
	write $\Pi_S$ instead of $\Pi_{\{S\}}$.  
\item A sub-Tannakian category
	$\mathcal{T}'$ of $\mathcal{T}$ is called \emph{replete} if it is equal
	to the $\mathcal{T}$-span of its objects. By \cite[Prop.~2.21]{DeligneMilne} replete sub-Tannakian categories of
	$\mathcal{T}$ correspond to quotients of
	$\Pi_{\mathcal{T}}:=\aut(\omega)$.
\end{enumerate}
\end{Definition}
Recall that every affine group scheme over a field is the inverse limit of its
finite type quotients (see for example \cite[3.3, ~Cor.]{Waterhouse}). From the
Tannakian perspective, this can be seen as follows: if $(\mathcal{T},\omega)$
is a neutral Tannakian category over a field $k$, and if $S_1,S_2$ are two objects of $\mathcal{T}$, then for $i=1,2$, there are full embeddings
$\left<S_i\right>_{\otimes}\hookrightarrow \left<S_1\oplus S_2\right>_\otimes$,which induce quotient maps $\Pi_{S_1\oplus S_2}\twoheadrightarrow \Pi_{S_i}$.
These maps are the transition maps in a projective system $\{\Pi_S|S\in\mathcal{T}\}$, and \[\Pi_\mathcal{T}\cong \varprojlim_{S\in \mathcal{T}} \Pi_S.\]

\begin{Lemma}\label{lem:condSES}
	Let
\begin{equation*}
(\mathcal{T}'',\omega'')\xrightarrow{B} (\mathcal{T},\omega)\xrightarrow{A} (\mathcal{T}',\omega')
\end{equation*}
be a sequence of additive, exact, tensor functors between neutral Tannakian
categories over a field $k$, and let
\begin{equation}\label{eq:sesgp}
	\Pi_{\mathcal{T}'}\xrightarrow{a} \Pi_{\mathcal{T}} \xrightarrow{b} \Pi_{\mathcal{T}''}
\end{equation}
	be the associated sequence of affine $k$-group schemes. 
	
	The following statements are equivalent.
	\begin{enumerate}[label={\emph{(\alph*)}}, ref={(\alph*)}]
		\item\label{item:condSES:a} The sequence \eqref{eq:sesgp} is exact in the middle;
		\item\label{item:condSES:b} for every object $S\in \mathcal{T}$ the  sequence
			\begin{equation}\label{eq:sesqt}
				1\rightarrow\Pi_{A(S)}\xrightarrow{\bar{a}}\Pi_S\xrightarrow{\bar{b}}\Pi_{\mathcal{T}_S}\rightarrow 1
			\end{equation} 
of affine $k$-group schemes 
is fpqc-exact, where  $\mathcal{T}_S$ is the set of objects in $\left< S\right>_\otimes$ which are in the span of the essential image of $B$.
\end{enumerate}
\end{Lemma}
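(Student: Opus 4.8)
The strategy is to reduce the exactness of the "big" sequence \eqref{eq:sesgp} to exactness of the finite-type sequences \eqref{eq:sesqt} by passing to inverse limits, using the dictionary between replete sub-Tannakian categories and quotient group schemes recalled just before the statement. The key structural fact I want to exploit is that $\Pi_{\mathcal{T}}\cong \varprojlim_{S\in\mathcal{T}}\Pi_S$, that this limit is filtered (via the maps $\Pi_{S_1\oplus S_2}\twoheadrightarrow\Pi_{S_i}$), and that the functors $A,B$ are compatible with taking spans: $A$ sends $\langle S\rangle_\otimes$ into $\langle A(S)\rangle_\otimes$, inducing $\bar a:\Pi_{A(S)}\to\Pi_S$ which is the restriction of $a:\Pi_{\mathcal{T}'}\to\Pi_{\mathcal{T}}$; and likewise $b$ restricts to $\bar b:\Pi_S\to\Pi_{\mathcal{T}_S}$, where $\Pi_{\mathcal{T}_S}$ is the quotient of $\Pi_S$ corresponding to the replete subcategory $\langle\mathcal{T}_S\rangle_\otimes\subseteq\langle S\rangle_\otimes$.

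First I would set up the categorical bookkeeping: fix $S\in\mathcal{T}$, note that $\langle A(S)\rangle_\otimes$ is generated by $A(S)$ and that the composite $A\circ B$ restricted to the relevant subcategories is compatible with the inclusions, so that $\bar b\circ\bar a$ is trivial for each $S$; taking the limit over $S$ recovers $b\circ a = 1$. Then, for the main equivalence, I would argue as follows. The sequence \eqref{eq:sesgp} is exact in the middle iff $\image(a)=\ker(b)$ as closed subgroup schemes (in the fpqc sense) of $\Pi_{\mathcal{T}}$. Both sides are inverse limits: $\ker(b)=\varprojlim_S\ker(\bar b:\Pi_S\to\Pi_{\mathcal{T}_S})$ because $\Pi_{\mathcal{T}''}=\varprojlim_S\Pi_{\mathcal{T}_S}$ and kernels commute with limits; and the scheme-theoretic image $\image(a)=\varprojlim_S\image(\bar a:\Pi_{A(S)}\to\Pi_S)$ because $\Pi_{\mathcal{T}'}=\varprojlim\Pi_{A(S)}$ maps compatibly, and scheme-theoretic image of affine group schemes is computed on coordinate rings as the image of a ring map, which passes to filtered colimits of Hopf algebras. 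So \eqref{eq:sesgp} is exact in the middle iff for every $S$ we have $\image(\bar a)=\ker(\bar b)$ inside $\Pi_S$ — and since $\bar a$ is automatically a closed immersion (being a map of affine group schemes whose Tannakian counterpart $\langle A(S)\rangle_\otimes\to\langle S\rangle_\otimes$ via $A$ need not be fully faithful, this actually requires an argument; see below) and $\bar b$ is faithfully flat (as $\langle\mathcal{T}_S\rangle_\otimes$ is replete in $\langle S\rangle_\otimes$), this last condition is precisely fpqc-exactness of \eqref{eq:sesqt}.

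The implication \ref{item:condSES:b} $\Rightarrow$ \ref{item:condSES:a} is then the limit argument above; for \ref{item:condSES:a} $\Rightarrow$ \ref{item:condSES:b} I would, conversely, show that if \eqref{eq:sesgp} is exact in the middle then restricting along the closed immersion $\Pi_S\hookrightarrow\Pi_{\mathcal{T}}$ — i.e. intersecting $\image(a)$ and $\ker(b)$ with the quotient $\Pi_S$ — preserves the equality; here the compatibility $\ker(b)\cap\Pi_S=\ker(\bar b)$ comes from $\Pi_{\mathcal{T}_S}$ being a quotient of $\Pi_{\mathcal{T}''}$, and $\image(a)$ surjecting onto $\image(\bar a)$ from the filtered-limit description. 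One also needs $1\to\Pi_{A(S)}\to\Pi_S$ to be left-exact, i.e. $\bar a$ a closed immersion: this is where I'd invoke that $\langle A(S)\rangle_\otimes$, being generated by the single object $A(S)=A(S)$, satisfies the criterion of \cite[Prop.~2.21]{DeligneMilne}/\cite{EsnaultHaiSun} that $A$ restricted there is "fully faithful onto a replete subcategory after passing to the span" — more carefully, one replaces $\langle A(S)\rangle_\otimes$ by the image replete subcategory and checks the induced map is a closed immersion; the essential-image span in the definition of $\mathcal{T}_S$ is exactly tailored so that this works.

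\textbf{Main obstacle.} The genuinely delicate point is not the limit formalism but verifying that scheme-theoretic image and kernel genuinely commute with the relevant filtered inverse limits of affine group schemes \emph{and} that $\bar a$ is a closed immersion (equivalently $A$ induces a faithful, hence "surjective-on-$\pi_1$-dual" functor on the spans) — in other words, that "exact in the middle" for the limit sequence is detected levelwise without a lim$^1$-type obstruction. I expect to handle this by working entirely on the level of (ind-)Hopf algebras: exactness in the middle translates to an equality of sub-coalgebras / quotient-coalgebras of $\mathcal{O}(\Pi_{\mathcal{T}})=\varinjlim\mathcal{O}(\Pi_S)$, and equalities of subobjects of a filtered colimit are detected at finite level, which is what makes the whole equivalence go through cleanly.
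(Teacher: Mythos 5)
Your overall skeleton — write \eqref{eq:sesgp} as the inverse limit over $S\in\mathcal{T}$ of the sequences \eqref{eq:sesqt}, note that $\bar a$ is automatically a closed immersion and $\bar b$ automatically faithfully flat by \cite[Prop.~2.21]{DeligneMilne}, and then compare $\image(a)$ with $\ker(b)$ levelwise — is the same as the paper's, and the direction \ref{item:condSES:b} $\Rightarrow$ \ref{item:condSES:a} goes through as you describe (modulo the preliminary reduction, which you skip, of replacing $\mathcal{T}'$ and $\mathcal{T}''$ by the spans of the essential images of $A$ and $B$; without it the identifications $\Pi_{\mathcal{T}'}\cong\varprojlim_S\Pi_{A(S)}$ and $\Pi_{\mathcal{T}''}\cong\varprojlim_S\Pi_{\mathcal{T}_S}$ are simply false, though harmless for kernels and images). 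The genuine gap is in \ref{item:condSES:a} $\Rightarrow$ \ref{item:condSES:b}. Everything there hinges on the compatibility you call ``$\ker(b)\cap\Pi_S=\ker(\bar b)$'', i.e.\ on the claim that the scheme-theoretic image of $\ker(b)$ under the projection $\pr\colon\Pi_{\mathcal{T}}\twoheadrightarrow\Pi_S$ (note: $\Pi_S$ is a quotient of $\Pi_{\mathcal{T}}$, not a closed subgroup, so ``intersecting with $\Pi_S$'' has to mean exactly this), equivalently that $I\cap H_S=I_S$ where $H=\varinjlim H_S$ is the Hopf algebra of $\Pi_{\mathcal{T}}$ and $I,I_S$ are the ideals cutting out $\ker(b)$, $\ker(\bar b)$. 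Your two justifications do not prove it: the fact that $\Pi_{\mathcal{T}_S}$ is a quotient of $\Pi_{\mathcal{T}''}$ only gives the inclusion $\pr(\ker(b))\subset\ker(\bar b)$, and the principle ``equalities of subobjects of a filtered colimit are detected at finite level'' is false in the generality you need: from $\bigcup_S I_S=\bigcup_S J_S$ one cannot conclude $I_S=J_S$ unless one already knows $I_S=I\cap H_S$ (for the image side this trace property is automatic, since $J_S=\ker(\bar a^\sharp)=\ker(a^\sharp)\cap H_S$; for the kernel side it is not, because $I_S$ is generated by the augmentation ideal of a different Hopf subalgebra, and the equality $(HK^+)\cap H_S=H_SK_S^+$ is precisely what has to be shown).

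That missing compatibility is where the actual content of the lemma sits, and it is not a limit-formalism statement: proving it amounts to identifying $\Pi_{\mathcal{T}_S}$ with the cokernel of $\bar a$, which is what the paper does. Concretely, the paper first shows that $\image(\bar a)$ is normal in $\Pi_S$ (by comparing $H'=\ker(\Pi_{\mathcal{T}'}\to\Pi_{A(S)})$ with $\Pi_{\mathcal{T}'}\cap\ker(\Pi_{\mathcal{T}}\to\Pi_S)$), and then uses the Tannakian description of quotients (\cite[Cor.~2.9, Prop.~2.21]{DeligneMilne}): the cokernel of $\bar a$ corresponds to the replete subcategory of $\langle S\rangle_\otimes$ of objects $S'$ with $A(S')$ trivial, and the hypothesis that \eqref{eq:sesgp} is exact — i.e.\ that $\Pi_{\mathcal{T}''}$ is the cokernel of $a$ — is exactly what identifies this subcategory with $\mathcal{T}_S$. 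An equivalent representation-theoretic argument (every object of $\langle S\rangle_\otimes$ killed by $\pr(\ker b)$ is killed by $\ker(b)$, hence lies in the replete span of the image of $B$, hence in $\mathcal{T}_S$) would also close your gap, but some argument of this Tannakian type is indispensable; as written, your proposal defers the whole difficulty to a colimit principle that does not apply. Your discussion of why $\bar a$ is a closed immersion should also be cleaned up: the correct reason is simply that every object of $\langle A(S)\rangle_\otimes$ is a subquotient of $A(S')$ for some $S'\in\langle S\rangle_\otimes$, which is \cite[Prop.~2.21]{DeligneMilne}; it has nothing to do with the definition of $\mathcal{T}_S$, which concerns $B$ rather than $A$.
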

\begin{proof}
Let $\mathcal{I}$ be the essential image of $A$ and let $\mathcal{J}$ be the
essential image of $B$. Then, by \cite[Prop.~2.21]{DeligneMilne}  the sequence
\eqref{eq:sesgp} factors as
\begin{equation*}
	\begin{tikzcd}
\Pi_{\mathcal{T}'}\ar[bend left=30]{rr}{a}\rar[twoheadrightarrow]& \Pi_{\mathcal{I}}^{\mathcal{T}'}\rar[hookrightarrow] &\Pi_{\mathcal{T}} \rar[twoheadrightarrow]\ar[bend left=30]{rr}{b}&\Pi_{\mathcal{J}}^{\mathcal{T}}\rar[hookrightarrow]& \Pi_{\mathcal{T}''}.
	\end{tikzcd}
\end{equation*}
%
By replacing $\mathcal{T}'$ with
$\langle\mathcal{I}\rangle_\otimes^{\mathcal{T}'}$ and $\mathcal{T}''$ with
$\langle \mathcal{J}\rangle_\otimes^{\mathcal{T}}$, we can and will assume that $a$ is
injective (or, equivalently a closed immersion, see
\cite[15.3~Thm.]{Waterhouse}) and that $b$ is faithfully flat. 

We already remarked that
\begin{equation*}
\Pi_{\mathcal{T}}\cong\varprojlim_{S\in\mathcal{T}}\Pi_S.
\end{equation*}
Moreover, as $a$ is a closed immersion, \cite[Prop.~2.21]{DeligneMilne} shows
that for every $S'\in\mathcal{T}'$ there exists $S\in\mathcal{T}$ such that
$S'\in\langle A(S)\rangle_\otimes$, in particular the algebraic groups
$\Pi_{A(S)}$ are cofinal in the projective system
$\{\Pi_{S'}\}_{S'\in\mathcal{T}'}$ and thus
\begin{equation*}
\Pi_{\mathcal{T}'}\cong\varprojlim_{S\in\mathcal{T}}\Pi_{A(S)}.
\end{equation*}
As $b$ is faithfully flat, $B$ is  fully faithful and the essential image of $B$ is replete in $\mathcal{T}$. In particular for every object $S''\in\mathcal{T}''$ one has that 
$\Pi_{S''}=\Pi_{B(S'')}$ and thus 
\begin{equation}
\Pi_{\mathcal{T}''}\cong\varprojlim_{S\in\mathcal{T}}\Pi_{\mathcal{T}_S},
\end{equation}
where $\mathcal{T}_S$ consists of all objects in $\langle S\rangle_\otimes$ that are in the essential image of $B$. 

All together, we proved that the sequence \eqref{eq:sesgp} is the inverse limit over the
sequences \eqref{eq:sesqt}, indexed by the objects of $\mathcal{T}$.

If \eqref{eq:sesqt} is exact for every $S\in\mathcal{T}$, then \eqref{eq:sesgp}
is exact in the middle because for a projective system $\{G_i\}_i$ of $k$-group
schemes and for any commutative ring $R$, we have  $(\varprojlim
G_i)(R)=\varprojlim (G_i(R))$ and the inverse limit is a (left) exact functor.  This proves \ref{item:condSES:b} $\Rightarrow$ \ref{item:condSES:a}.

To prove \ref{item:condSES:a} $\Rightarrow$ \ref{item:condSES:b}, assume that \eqref{eq:sesgp} is exact. Recall that we may assume that $a$
is a closed immersion and that $b$ is faithfully flat. We get the following commutative diagram
\begin{equation}\label{eq:condSES:diag}
	\begin{tikzcd}
1\ar{r}&
\Pi_{\mathcal{T}'}
\ar{r}{a}
\ar{d}{\pr'}&
\Pi_{\mathcal{T}}
\ar{r}{b}
\ar{d}{\pr}&
\Pi_{\mathcal{T}''}
\ar{r}
\ar{d}{\pr''}&
1\\ 1
\ar{r}&\Pi_{A(S)}\ar{r}{\bar{a}}&\Pi_S\ar{r}{\bar{b}}&
\Pi_{\mathcal{T}_S}\ar{r}& 1
	\end{tikzcd}
\end{equation}
where all vertical arrows are faithfully flat. We want to prove that the bottom
row is exact.  It follows directly from \cite[Prop.~2.21]{DeligneMilne} that the morphism $\bar{a}$ is a
closed immersion and that $\bar{b}$ is faithfully flat.  It is also clear that $\bar{b}\cdot\bar{a}$
is the trivial morphism. It remains to prove that \eqref{eq:sesqt} is exact in
the middle. 

We can write $\Pi_S$ as $\Pi_{\mathcal{T}}/H$ and $\Pi_{A(S)}$ as
$\Pi_{\mathcal{T}'}/H'$, for some normal subgroup schemes $H,H'$.  As $a$ is injective, we see that $\ker(\pr\circ a)=\Pi_{\mathcal{T}'}\cap H$, and as $\bar{a}$ is injective,
$H'=\ker(\bar{a}\circ\pr')$. As \eqref{eq:condSES:diag} commutes, the two kernels must agree and thus $H'=\Pi_{\mathcal{T}'}\cap H$.

In particular, this shows that $\Pi_{A(S)}$ is normal in $\Pi_S$. Thus to prove
that \eqref{eq:sesqt} is exact, it is enough to prove that
$\Pi_{\mathcal{T}_S}$ is the cokernel of $\bar{a}$. By the universal property
of the cokernel, together with \cite[Cor.~2.9, Prop.~2.21]{DeligneMilne}, the
cokernel of $\bar{a}$ corresponds uniquely to a replete sub-Tannakian category
$\mathcal{C}$ of $\langle S\rangle_\otimes$, characterized by the property that
for every $S'\in\langle S\rangle_\otimes$, $A(S')$ is trivial if and only if $S'\in
\mathcal{C}$, where by \emph{trivial} we mean that $A(S')$ is the direct sum of
copies of the unit object of $\mathcal{T}''$.  Our assumptions imply that
$\Pi_{\mathcal{T}''}$ is the cokernel of $a$, hence if $S'\in\mathcal{T}$,
$A(S')$ is trivial if and only if $S'\in \mathcal{T}''$, hence by definition
$\mathcal{T}_S$ is the maximal replete sub-Tannakian category of $\langle
S\rangle_\otimes$ whose objects are trivialized by $A$, which finishes the
proof.

\end{proof}

\section{The homotopy exact sequence for regular singular stratified bundles}\label{sec:Homty}

We return to the notations that were introduced in \Cref{sec:Rs-strat}. In particular, $k$ is an algebraically closed field of characteristic $p>0$ and  $X$ is a smooth, connected,  separated scheme of finite type over  $k$.
We recall that given $x\in X(k)$, we have associated $k$-group schemes $\Pi(X,x)$, $\Pi^{\rs}(X,x)$ and $\Pi^{\rs}((X,\overline{X}),x)$, where $(X,\overline{X})$ is a good partial
compactification of $X$ (\Cref{defn:stratifiedBundles}).

Note that according to \cite[Prop.~4.5]{Kindler/FiniteBundles},
$\Strat^{\rs}((X,\overline{X}))$ is a replete sub-Tannakian category of
$\Strat(X)$ and hence there is a  quotient map $\Pi(X,x)\twoheadrightarrow
\Pi^{\rs}( (X,\overline{X}),x)$.

\begin{Theorem}\label{thm:sesFunG}
Let $X$ and $Y$ be smooth, connected, separated $k$-schemes of finite type and let $f:Y\to X$ be a smooth
projective morphism with geometrically connected fibers. Fix good partial
compactifications $(X,\overline{X})$ and $(Y,\overline{Y})$ and write $D_X:=(\overline{X}\setminus X)_{\red}$, $D_Y:=(\overline{Y}\setminus Y)_{\red}$.
Let $\bar{f}:\overline{Y}\to\overline{X}$ be an extension of $f$ satisfying:
\begin{enumerate}[label={\emph{(\alph*)}}, ref={(\alph*)}]
	\item\ $\bar{f}(\overline{Y})$ contains all generic points of $D_X$.
	\item $\bar{f}$ is log smooth when $\overline{Y}$ and $\overline{X}$ are
		equipped with the log structures defined by $D_Y$, resp.~$D_X$ (\Cref{rem:log-smoothness}).				
\end{enumerate}
If $y\in Y$ is a closed point, define $x:=f(y)$ and let $j:Y_x\hookrightarrow \overline{Y}$ be the fiber over $x$. Then the sequence 
\begin{equation}\label{eq:sesFunGp}
\Pi(Y_x,y)\xrightarrow{j^*}\Pi^{\rs}( (Y,\overline{Y}),y)\xrightarrow{f^*} \Pi^{\rs}( (X,\overline{X}),x)\to 1
\end{equation}
of affine $k$-group schemes is fpqc exact.
\end{Theorem}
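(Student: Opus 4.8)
The plan is to deduce this theorem from the exactness criterion of \Cref{lem:condSES} applied to the sequence of Tannakian categories
\[
\Strat(Y_x)\xleftarrow{\;j^*\;}\Strat^{\rs}((Y,\overline{Y}))\xleftarrow{\;f^*\;}\Strat^{\rs}((X,\overline{X})),
\]
together with the analogous (already known) homotopy sequence for full categories of stratified bundles furnished by dos Santos in \cite{DosSantos/HomotopySES}. First I would check that the functors $f^*$ and $j^*$ are well defined: $f^*$ sends $(X,\overline{X})$-regular singular bundles to $(Y,\overline{Y})$-regular singular bundles because $\bar f$ is log smooth (pull back a logarithmic extension along $\bar f$), and $j^*$ lands in $\Strat(Y_x)$ since $Y_x$ is smooth; moreover $f^*$ is fully faithful with replete essential image by the usual descent argument (using that $f$ has geometrically connected fibers, so $f_*\mathcal{O}_Y=\mathcal{O}_X$), and $j^*$ is faithful. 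I would also note that $f^*$ is a closed immersion on $\Pi$'s once we pass to the spans, exactly as in the proof of \Cref{lem:condSES}.

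The exactness on the right, $f^*\colon\Pi^{\rs}((Y,\overline{Y}),y)\to\Pi^{\rs}((X,\overline{X}),x)$ faithfully flat, is the statement that $f^*$ is fully faithful onto a replete subcategory; this is the standard consequence of $f_*\mathcal{O}_Y=\mathcal{O}_X$ and properness of $f$ — any subobject of $f^*E$ in $\Strat^{\rs}(Y,\overline{Y})$ descends, because it descends already in $\Strat(Y)$ by dos Santos and the descended bundle on $X$ is automatically $(X,\overline{X})$-regular singular by \Cref{cor:checkRSafterPullback}. That corollary is the crucial input that makes regular singularity descend along $\bar f$, and its hypotheses are precisely conditions (a), (b) of the theorem. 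So the only real content is exactness in the middle, i.e.\ verifying condition \ref{item:condSES:b} of \Cref{lem:condSES}: for each object $S\in\Strat^{\rs}((Y,\overline{Y}))$ the sequence
\[
1\to\Pi_{f^*\!\text{-trivialized part}}\to\Pi_S\to\Pi_{\langle S\rangle\cap\,\text{span}(\text{image of }f^*)}\to 1
\]
is exact, where now ``$f^*$-trivialized'' is measured by $j^*$: I must show that the maximal replete subcategory of $\langle S\rangle_\otimes$ trivialized by $j^*$ coincides with $\langle S\rangle_\otimes\cap f^*\Strat^{\rs}((X,\overline{X}))$.

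My approach for this middle exactness is to compare with dos Santos's sequence $\Pi(Y_x,y)\to\Pi(Y,y)\to\Pi(X,x)\to 1$ for \emph{all} stratified bundles, which gives the analogous statement with $\Strat$ in place of $\Strat^{\rs}$. Concretely, given $S$ regular singular on $Y$, an object $T\in\langle S\rangle_\otimes$ with $j^*T$ trivial on $Y_x$ is, by the non-regular-singular homotopy sequence, of the form $f^*U$ for some $U\in\Strat(X)$ inside $\langle f_*S\rangle$; I then need $U$ to be $(X,\overline{X})$-regular singular, which follows from \Cref{cor:checkRSafterPullback} since $f^*U\cong T$ is a subquotient of a tensor construction on $S$, hence $(Y,\overline{Y})$-regular singular. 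Conversely any $f^*U$ with $U$ regular singular has $j^*f^*U$ trivial because $Y_x\to \Spec k$ factors the composite. The containment of spans then follows by tracking that these identifications are compatible with subquotients. The main obstacle I anticipate is purely bookkeeping: matching the ``span of the essential image of $B$'' in \Cref{lem:condSES} with the geometrically meaningful category $f^*\Strat^{\rs}((X,\overline{X}))$, and ensuring that the fiber functor identifications (base points $y\in Y_x(k)$, $x\in X(k)$) are propagated coherently through all three categories — together with checking that restricting a logarithmic extension of $f^*U$ along $\bar f$ to $Y_x$ really does trivialize it, which uses that $Y_x$ is contained in the smooth locus and disjoint from $D_Y$ away from a codimension $\geq 2$ locus, so that regular singularity is detected correctly on the fiber.
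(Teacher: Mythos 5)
Your proposal is correct and takes essentially the same route as the paper: deduce everything from dos Santos's homotopy sequence for the full categories of stratified bundles, apply the exactness criterion of \Cref{lem:condSES} object by object, and use \Cref{cor:checkRSafterPullback} to identify the objects of $\left<E\right>_\otimes$ pulled back from $\Strat(X)$ with those pulled back from $\Strat^{\rs}((X,\overline{X}))$. Only cosmetic clean-up is needed: $Y_x$ lies entirely in $Y$, hence is disjoint from $D_Y$, and $f\circ j$ factors through $\Spec k$, so $j^*f^*U$ is trivial for any $U$ without any codimension argument; and the surjectivity of $f^*$ can be read off even more directly from the commutative square formed with the faithfully flat quotient maps $\Pi(Y,y)\twoheadrightarrow\Pi^{\rs}((Y,\overline{Y}),y)$ and $\Pi(X,x)\twoheadrightarrow\Pi^{\rs}((X,\overline{X}),x)$, as the paper does.
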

\begin{proof}
	Notice that the assumptions of the theorem imply that $Y_x$ is proper, hence all stratified bundles  on  $Y_x$ are trivially regular singular.
	We will deduce the theorem from \cite[Thm.~1]{DosSantos/HomotopySES},
	which states that under our hypotheses the sequence
	\[\Pi(Y_x, y)\rightarrow \Pi(Y,y)\rightarrow \Pi(X,x)\rightarrow 1\]
	is exact.
	In particular, the top horizontal arrow in the  commutative diagram 
	\begin{equation*}
\begin{tikzcd}
\Pi(Y,y)\ar{r}{f^*}\ar{d}{\pi_Y}&\Pi(X,x)\ar{d}{\pi_X}\\
\Pi^{\rs}( (Y,\overline{Y}),y)\ar{r}{f^*} &\Pi^{\rs}( (X,\overline{X}),x)
\end{tikzcd}
\end{equation*}
is faithfully flat, and so are the vertical arrows. This shows that the bottom
horizontal arrow is faithfully flat as well.

If $E\in \Strat^{\rs}( (Y,\overline{Y}))$, then \cite[Prop.~4.5]{Kindler/FiniteBundles} shows that the span $\left<E\right>_{\otimes}$ is independent of whether we compute it in $\Strat^{\rs}((Y,\overline{Y}))$ or the ambient category $\Strat(Y)$.

Similarly, if $E\in\Strat^{\rs}( (Y,\overline{Y}))$, then we can consider the span
of $j^*E$ in $\Strat(Y_x)$  and we write $\Pi_{j^*E}=\Pi_{j^*E}^{\Strat(Y_x)}$
for the affine $k$-group scheme attached to it. By \Cref{lem:condSES}, to prove
that \eqref{eq:sesFunGp} is exact, it is
enough to prove that for every $(Y,\overline{Y})$-regular singular stratified
bundle $E$, the sequence
\begin{equation}\label{eq:sesMonGpRS}
1\to\Pi_{j^*E}\to\Pi_{E}\to\Pi_{\mathcal{T}^{\rs}_E}\to 1
\end{equation}
is exact, where $\mathcal{T}^{\rs}_E$ are the objects of $\langle
E\rangle_\otimes$ that are also in $\Strat^{\rs}(X,\overline{X})$ (seen as a full replete
sub-Tannakian category of $\Strat^{\rs}(Y,\overline{Y})$ via $f^*$).  

By \cite[Thm.~1]{DosSantos/HomotopySES} the sequence
 \begin{equation*}
 \Pi(Y_x,y)\xrightarrow{j^*}\Pi(Y,y)\xrightarrow{f^*} \Pi(X,x)\rightarrow 1
\end{equation*}
is exact. Hence, the sequence
\begin{equation}\label{eq:sesMonGp}
1\rightarrow\Pi_{j^*E}\to\Pi_E\to\Pi_{\mathcal{T}_E}\rightarrow 1
\end{equation}
is exact for every $E\in \Strat^{\rs}((Y,\overline{Y}))$ (\Cref{lem:condSES}), where $\mathcal{T}_E$ are the objects of
$\langle E\rangle_\otimes$ that are also in $\Strat(Y)$. By
\Cref{cor:checkRSafterPullback}, we have $\mathcal{T}_E=\mathcal{T}^{\rs}_E$, so the
sequence \eqref{eq:sesMonGp} is isomorphic to the sequence
\eqref{eq:sesMonGpRS}, which is then exact.
Using \Cref{lem:condSES} again, this concludes the proof.
\end{proof}

\begin{Remark}\label{rmk:counterex}
The proof above shows a little more: namely, if there exists  a stratified
bundle $E$ on $X$ which is not $(X,\overline{X})$-regular singular but such that $f^*E$
is $(Y,\overline{Y})$-regular singular (e.g.~as in the example of Section~\ref{sec:ExRa}), then
the sequence \eqref{eq:sesFunGp} cannot be exact.

To see this, note that under the conditions of \Cref{thm:sesFunG} the surjectivity of 
\begin{equation*}
f^*:\Pi^{\rs}(Y,\overline{Y},y)\to\Pi^{\rs}(X,\overline{X},x)
\end{equation*}
holds even without the log smoothness hypothesis on $\bar{f}$. This allows us
to see $\Strat(X)$ and $\Strat^{\rs}(X,\overline{X})$  as replete sub-Tannakian
categories of $\Strat(Y)$.
As in the proof of \Cref{thm:sesFunG}, consider the full subcategories
\[\mathcal{T}_{f^*E}:=\left<f^*E\right>_{\otimes}\cap \Strat(X)\subset \Strat(Y),\] and 
\[\mathcal{T}^{\rs}_{f^*E}:=\left< f^*E\right>_\otimes\cap \Strat^{\rs}( (X,\overline{X}))\subset \Strat(Y).\] 

The exactness of \eqref{eq:sesMonGp} for $f^*E$ reduces to the isomorphism
$\Pi_{f^*E}\cong \Pi_{\mathcal{T}^{\rs}_{f^*E}}$. 
Note that indeed by \cite[Thm.1]{DosSantos/HomotopySES} and  \Cref{thm:sesFunG} one has that $\Pi_{f^*E}\cong\Pi_{\mathcal{T}_{f^*E}}$. On the other hand, if $E$ is not $(X,\overline{X})$-regular singular, then $f^*E\not\in
\mathcal{T}^{\rs}_{f^*E}$, so the inclusion $\mathcal{T}^{\rs}_{f^*E}\subset \mathcal{T}_{f^*E}$ is strict. 
This means that the induced morphism $\Pi_{f^*E}\rightarrow
\Pi_{\mathcal{T}^{\rs}_{f^*E}}$ is not an isomorphism, so  it follows from \Cref{lem:condSES} the sequence \eqref{eq:sesFunGp} cannot be exact in the middle. This shows that the log smoothness assumption in \Cref{thm:sesFunG} cannot be dropped entirely.
\end{Remark}

A stratified bundle $E$ on $X$  is called \emph{regular singular} if it is
$(X,\overline{X})$-regular singular for every good partial compactification
$\overline{X}$. We denote by $\Pi^{\rs}(X,x)$ the Tannakian fundamental group
associated with the full subcategory of $\Strat(X)$ given by all
regular singular stratified bundles (\cite[Prop.~7.4]{Kindler/FiniteBundles}). By \cite[Prop.~7.5]{Kindler/FiniteBundles}
if $X$ admits a good compactification $\overline{X}$, then a stratified bundle
$E$ is regular singular if and only if it is $(X,\overline{X})$-regular
singular. Together with \cite[Thm.~1.3]{Kindler/Pullback} this implies the
following.

\begin{Corollary}
Retain the notations and assumptions of \Cref{thm:sesFunG}, and assume furthermore that $\overline{X}$ is proper. Then the sequence
\begin{equation*}
\Pi(Y_x,y)\xrightarrow{j^*}\Pi^{\rs}(Y,y)\xrightarrow{f^*} \Pi^{\rs}(X,x)\to 1
\end{equation*}
is fpqc exact.
\end{Corollary}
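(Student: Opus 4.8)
The plan is to deduce the corollary from \Cref{thm:sesFunG} by observing that, under the extra hypothesis that $\overline{X}$ is proper, the group schemes appearing there coincide with the ones in the statement. First I would recall that when $X$ admits a good compactification $\overline{X}$, \cite[Prop.~7.5]{Kindler/FiniteBundles} says a stratified bundle on $X$ is regular singular (in the intrinsic sense, i.e.~$(X,\overline{X}')$-regular singular for \emph{every} good partial compactification $\overline{X}'$) if and only if it is $(X,\overline{X})$-regular singular for this one particular $\overline{X}$. Hence $\Strat^{\rs}(X)=\Strat^{\rs}((X,\overline{X}))$ as subcategories of $\Strat(X)$, and consequently $\Pi^{\rs}(X,x)=\Pi^{\rs}((X,\overline{X}),x)$ with respect to the fixed base point $x$. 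This takes care of the target of the sequence.

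Next I would deal with the middle term. Here the subtlety is that the corollary only assumes $\overline{X}$ proper, \emph{not} that $\overline{Y}$ is proper, so a priori $\Pi^{\rs}((Y,\overline{Y}),y)$ could depend on $\overline{Y}$ and differ from the intrinsic $\Pi^{\rs}(Y,y)$. This is exactly what \cite[Thm.~1.3]{Kindler/Pullback} is invoked for: that result (on the behaviour of regular singularity under the smooth, projective morphism $f$, or more precisely on extending the good compactification of $X$ along $f$) shows that, given a good compactification $\overline{X}$ of $X$ and our $\bar f$, the partial compactification $(Y,\overline{Y})$ can be taken to be (or replaced by one that is) good in the intrinsic sense, so that $\Strat^{\rs}((Y,\overline{Y}))=\Strat^{\rs}(Y)$ and therefore $\Pi^{\rs}((Y,\overline{Y}),y)=\Pi^{\rs}(Y,y)$. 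One should check that replacing $\overline{Y}$ this way does not disturb hypotheses (a) and (b) of \Cref{thm:sesFunG} — i.e.~that one can still arrange $\bar f(\overline{Y})$ to contain all generic points of $D_X$ and $\bar f$ to be log smooth — which is the content one extracts from \cite[Thm.~1.3]{Kindler/Pullback} combined with \cite[Prop.~7.5]{Kindler/FiniteBundles}.

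Once both identifications are in place, the sequence of the corollary is literally the sequence \eqref{eq:sesFunGp} of \Cref{thm:sesFunG}, which is fpqc exact; the source term $\Pi(Y_x,y)$ and the map $j^*$ are unchanged since $Y_x$ is proper and all its stratified bundles are automatically regular singular. So the proof is a short bookkeeping argument: rewrite each of the three terms using the cited comparison results, then quote the theorem.

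The main obstacle I expect is the middle term: one has to be careful that the reduction to a \emph{good} (not merely \emph{good partial}) compactification $\overline{Y}$ is compatible with the log smoothness and ``contains all generic points of $D_X$'' hypotheses, and that $\Pi^{\rs}((Y,\overline{Y}),y)$ really is independent of the choice in the relevant range — this is precisely where \cite[Thm.~1.3]{Kindler/Pullback} does the real work, and the rest is formal.
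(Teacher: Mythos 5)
Your treatment of the right-hand term is exactly the paper's: with $\overline{X}$ proper, \cite[Prop.~7.5]{Kindler/FiniteBundles} gives $\Strat^{\rs}(X)=\Strat^{\rs}((X,\overline{X}))$, hence $\Pi^{\rs}(X,x)=\Pi^{\rs}((X,\overline{X}),x)$. The gap is in your treatment of the middle term. You propose to replace $(Y,\overline{Y})$ by a good (i.e.\ proper) compactification, attributing this to \cite[Thm.~1.3]{Kindler/Pullback}, and to conclude $\Strat^{\rs}((Y,\overline{Y}))=\Strat^{\rs}(Y)$, so that the corollary is literally \Cref{thm:sesFunG}. That is not what the cited result does, and the step it is supposed to justify is not available: there is in general no proper $\overline{Y}$ with strict normal crossings boundary to which $\bar f$ extends log smoothly --- the whole reason \Cref{thm:sesFunG} (and the Main Theorem) is phrased with good \emph{partial} compactifications of $Y$ is that such a replacement cannot be made while preserving hypotheses (a) and (b). Nor is the identification $\Pi^{\rs}((Y,\overline{Y}),y)=\Pi^{\rs}(Y,y)$ true for an arbitrary partial compactification: intrinsic regular singularity is a condition on \emph{all} good partial compactifications of $Y$, so $\Strat^{\rs}(Y)$ is in general a strictly smaller replete subcategory of $\Strat^{\rs}((Y,\overline{Y}))$.

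The paper's deduction keeps the two middle groups distinct. The citation \cite[Thm.~1.3]{Kindler/Pullback} is a pullback theorem: pullbacks of (intrinsically) regular singular stratified bundles along morphisms of smooth varieties are again regular singular. This is what is needed, because $\bar f$ only controls regular singularity of $f^*E$ with respect to the one compactification $\overline{Y}$, not with respect to all of them. Concretely: $\Strat^{\rs}(Y)\subset\Strat^{\rs}((Y,\overline{Y}))$ is replete, so there is a faithfully flat quotient $q:\Pi^{\rs}((Y,\overline{Y}),y)\twoheadrightarrow\Pi^{\rs}(Y,y)$; by the pullback theorem, $f^*$ carries $\Strat^{\rs}(X)=\Strat^{\rs}((X,\overline{X}))$ into $\Strat^{\rs}(Y)$, so the faithfully flat map $\Pi^{\rs}((Y,\overline{Y}),y)\to\Pi^{\rs}(X,x)$ of \Cref{thm:sesFunG} factors through $q$; and exactness of a sequence $A\to B\to C\to 1$ passes to such a quotient $B\twoheadrightarrow B'$ (the kernel of $B'\to C$ is the image under $q$ of the kernel of $B\to C$, hence the image of $A$), with $j^*$ composed with $q$ giving the first map. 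Without this factorization-and-quotient step (or some substitute for it), your argument does not establish the corollary, since the equality of middle terms you rely on is unproved and in general false.
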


\begin{Corollary}[{K\"unneth formula}]
Let $X$ and $Y$ be smooth, connected $k$-varieties with $Y$ projective, let $x\in X(k)$, $y\in Y(k)$ and let $z\in (Y\times_k X)(k)$ be the point induced by $x$ and $y$. Then, the natural morphism induced by the projections
\[\Pi^{\rs}(X\times_k Y,z)\to\Pi^{\rs}(X,x)\times \Pi^{\rs}	(Y,y)\]
is an isomorphism.
\end{Corollary}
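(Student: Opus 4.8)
The plan is to apply \Cref{thm:sesFunG} to the first projection $p_X\colon Y\times_k X\to X$. Since $Y$ is projective and smooth and $X$ is smooth, this is a smooth projective morphism with geometrically connected fibers (the fiber over every point being $Y$). Taking $\overline{X}$ to be a good compactification of $X$ — which exists if we grant it, but in any case one can work with a good partial compactification and the refined statement, or simply note that $X$ being a variety we may take $\overline{X}$ proper — and $\overline{Y\times_k X}:=Y\times_k\overline{X}$, the boundary divisor is $Y\times_k D_X$, the extension $\bar p_X\colon Y\times_k\overline{X}\to\overline{X}$ is just the projection, condition (a) is obvious, and condition (b) (log smoothness of $\bar p_X$) holds because $Y$ is smooth over $k$, so $Y\times_k\overline{X}\to\overline{X}$ is smooth hence log smooth for the pulled-back log structure. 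Thus \Cref{thm:sesFunG} (and the subsequent corollary identifying $\Pi^{\rs}(\cdot)$ with $\Pi^{\rs}((\cdot,\overline{\cdot}),\cdot)$ when the compactification is proper) yields an exact sequence
\[
\Pi((Y\times_k X)_x,z)=\Pi(Y,y)\xrightarrow{\;j^*\;}\Pi^{\rs}(Y\times_k X,z)\xrightarrow{\;p_X^*\;}\Pi^{\rs}(X,x)\to 1.
\]
Here I use that the fiber of $p_X$ over $x$ is $Y$ itself, that $Y$ is proper so \emph{every} stratified bundle on it is regular singular, and that $j\colon Y\hookrightarrow Y\times_k X$ is the closed immersion $y'\mapsto(y',x)$, which admits the retraction $p_Y$; hence $j^*$ is a split injection, i.e.\ $p_Y^*\circ j^*=\id$, and in particular $j^*$ is a closed immersion and the sequence above is a \emph{split} short exact sequence
\[
1\to\Pi^{\rs}(Y,y)\xrightarrow{\;j^*\;}\Pi^{\rs}(Y\times_k X,z)\xrightarrow{\;p_X^*\;}\Pi^{\rs}(X,x)\to 1.
\]

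A splitting of a short exact sequence of group schemes gives a semidirect product decomposition $\Pi^{\rs}(Y\times_k X,z)\cong\Pi^{\rs}(Y,y)\rtimes\Pi^{\rs}(X,x)$, with the action of $\Pi^{\rs}(X,x)$ on $\Pi^{\rs}(Y,y)$ arising from the conjugation action. So the remaining point is to show this action is trivial, which turns the semidirect product into the direct product $\Pi^{\rs}(Y,y)\times\Pi^{\rs}(X,x)$, and then to check that the resulting isomorphism is the one induced by the two projections. The cleanest way to see triviality of the action is symmetry: running the same argument with the roles of the two factors swapped (using the second projection $p_Y\colon Y\times_k X\to Y$, noting $X$ need not be proper but is a variety, and using the $(X,\overline X)$-refined version, or simply observing that $Y\times_k X\to Y$ is again smooth projective over the projective base — wait, $X$ is only a variety, so here one should instead invoke the refined statement of \Cref{thm:sesFunG} with respect to the good partial compactification $\overline{Y}\times_k X\to Y$, all of whose hypotheses are again satisfied since $X/k$ is smooth) one gets a second splitting, and a group scheme that is simultaneously a semidirect product $N\rtimes Q$ and $Q\rtimes N$ in compatible ways, with both $N$ and $Q$ normal, is the direct product.

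Alternatively, and perhaps more robustly, I would verify triviality of the monodromy action Tannakially: it suffices to show that pullback along $p_Y$ and $p_X$ induces an equivalence between $\Strat^{\rs}(X)\times\Strat^{\rs}(Y)$ — in the sense of the external tensor product $\boxtimes$ — and $\Strat^{\rs}(X\times_k Y)$, i.e.\ that every regular singular stratified bundle on $X\times_k Y$ is a subquotient of a sum of bundles of the form $p_X^*E\otimes p_Y^*F$. On the level of ordinary stratified bundles this is the Künneth statement already implicit in \cite{DosSantos/HomotopySES} (or follows from the homotopy sequence by the standard argument: the sequence $1\to\Pi(Y)\to\Pi(X\times Y)\to\Pi(X)\to1$ is split, and the conjugation action of $\Pi(X)$ on $\Pi(Y)$ factors through the fundamental group of the base, which here is a point since the family $Y\times X\to X$ is constant — more precisely, the bundle $p_Y^*F$ is visibly $\Pi(X)$-invariant because it is pulled back along the projection, so the induced extension is central, hence split as a direct product). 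Then one restricts to the regular singular parts, using \Cref{cor:checkRSafterPullback} in one direction and the fact that $p_X^*E\otimes p_Y^*F$ is $(X\times Y, \overline X\times \overline Y)$-regular singular when $E,F$ are regular singular (which follows since the external tensor product of the logarithmic extensions is again a logarithmic extension, being $\mathscr{D}(\log)$-stable and $\mathcal O$-coherent) in the other.

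The main obstacle I anticipate is the bookkeeping around the compactification hypotheses: \Cref{thm:sesFunG} as stated needs a good partial compactification with an extension of $f$ that is log smooth and surjective onto codimension-one points, and while for a product projection $Y\times_k\overline X\to\overline X$ these are all transparent (log smoothness is genuine smoothness of $Y/k$, surjectivity is obvious), one must be slightly careful that for the \emph{symmetric} application the base $X$ is not assumed proper, so one should cite the refined version of the main theorem (the one "dealing with regular singularity with respect to some specific good partial compactification") rather than the clean statement, and then match up $\Pi^{\rs}$ with the $(X,\overline X)$-version using that $X$, being a variety, does admit a good compactification. Apart from that, the only thing to be careful about is checking that the abstract direct-product decomposition obtained from the two splittings is genuinely the map induced by the two projections — but this is immediate from the construction, since $p_X^*$ and $p_Y^*$ are precisely the structure maps of the two factors in the decomposition.
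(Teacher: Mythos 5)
Your core step is exactly the paper's: apply \Cref{thm:sesFunG} to the first projection $p_X\colon Y\times_k X\to X$ with the good partial compactification $Y\times_k\overline{X}$, the point being that $\overline{p}_X\colon Y\times_k\overline{X}\to\overline{X}$ is log smooth simply because $Y/k$ is smooth. The problem is with how you finish. Your ``symmetry'' argument for triviality of the conjugation action is not legitimate: \Cref{thm:sesFunG} (also in its partial-compactification form) requires the morphism to be smooth and \emph{projective} with geometrically connected fibers, and the second projection $p_Y\colon Y\times_k X\to Y$ has fibers isomorphic to $X$, which is not proper; so the claim that ``all of whose hypotheses are again satisfied since $X/k$ is smooth'' is false, and no second exact sequence is available this way. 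Your alternative Tannakian route (proving that every regular singular stratified bundle on $X\times_k Y$ is a subquotient of sums $p_X^*E\otimes p_Y^*F$) is essentially the statement to be proved, and the sketch offered for it again leans on the triviality of the action you are trying to establish, so as written it is circular.

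The good news is that the detour through semidirect products and triviality of the action is unnecessary, and the missing step is already contained in your own observations: this is precisely the formal argument of \cite[X, Cor.~1.7]{SGA1}, which is how the paper concludes. Concretely, $p_Y^*$ sends $\Strat(Y)$ into $\Strat^{\rs}((Y\times_k X, Y\times_k\overline{X}))$ (extend along $\overline{p}_Y$), and $p_Y\circ j=\id_Y$ gives that the composite $\Pi(Y,y)\xrightarrow{j^*}\Pi^{\rs}(Y\times_k X,z)\to\Pi^{\rs}(Y,y)=\Pi(Y,y)$ is the identity; combined with exactness in the middle of your sequence and faithful flatness of $p_X^*$ on group schemes, one checks directly that the map to $\Pi^{\rs}(X,x)\times\Pi^{\rs}(Y,y)$ induced by the two projections has trivial kernel (any point of the kernel lies in the image of $j^*$ by middle exactness, and is then killed by the retraction) and is surjective (its image contains $1\times\Pi^{\rs}(Y,y)$ and maps onto $\Pi^{\rs}(X,x)$). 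A homomorphic retraction of a normal subgroup forces the direct product; no analysis of the conjugation action is needed. With this replacement your proof coincides with the paper's, whose entire content beyond \cite[X, Cor.~1.7]{SGA1} is the remark that $\overline{X}\times_k Y$ is a good partial compactification of $X\times_k Y$ log smooth over $\overline{X}$.
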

\begin{proof}Note that $Y$ is proper, hence $\Pi(Y,y)=\Pi^{\rs}(Y,y)$.

The strategy of the proof is exactly the same as in \cite[X, Cor.~1.7]{SGA1}.
In order to make use of \Cref{thm:sesFunG}, we only need to remark that for every partial good compactification $\overline{X}$ of $X$ there exists a good partial compactification of $X\times Y$ (log) smooth over $\overline{X}$, namely $\overline{X}\times Y$. 
\end{proof}

\bibliographystyle{amsalphacustomlabels}
\bibliography{alggeo}

\end{document}